\newcommand{\fg}{\mathfrak{g}}
\newcommand{\fr}{\mathfrak{r}}
\newcommand{\fh}{\mathfrak{h}}
\newcommand{\fs}{\mathfrak{s}}
\newcommand{\fa}{\mathfrak{a}}
\newcommand{\N}{\mathbb{N}}
\newcommand{\Z}{\mathbb{Z}}
\newcommand{\R}{\mathbb{R}}
\newcommand{\C}{\mathbb{C}}
\newcommand{\cV}{\mathcal{V}} 
\newcommand{\cN}{\mathcal{N}} % Nilpotente kegel
\newcommand{\cL}{\mathcal{L}}
\newcommand{\SL}{\mathrm{SL}}
\newcommand{\Ad}{\mathrm{Ad}} % Group representation
\newcommand{\ad}{\mathrm{ad}} % LA representation
\newcommand{\vol}{\mathrm{vol}}
\newtheorem{Theorem}{Theorem}[section]
\newtheorem{Proposition}[Theorem]{Proposition}
\newtheorem{Lemma}[Theorem]{Lemma}
\newtheorem{Corollary}[Theorem]{Corollary}
\theoremstyle{definition}
\newtheorem{Definition}{Definition}[section]
\newtheorem{Remark}{Remark}[section]
\newtheorem{Problem}{Problem}
\begin{document}

\title{Local noncommutative De Leeuw Theorems beyond reductive Lie groups}
\author{
{\it Dedicated to Karl-Hermann Neeb in honour of his 60th birthday}\\[6mm]
B.\ Janssens and B.\ Oudejans
}
\maketitle

\begin{abstract}
Let $\Gamma$ be a discrete subgroup of a unimodular locally compact group~$G$.
In \cite{CJKM23}, it was shown that the $L_p$-norm of a Fourier multiplier $m \colon G \rightarrow \C$ on $\Gamma$ can be bounded locally 
by its $L_p$-norm on $G$, modulo a constant $c(A)$ which depends on the support $A$ of $m|_{\Gamma}$.
In the context where $G$ is a connected Lie group with Lie algebra $\fg$, we develop tools to find explicit bounds on $c(A)$.
We show that the problem reduces to: 
\begin{itemize}
\item[1)] The adjoint representation of the semisimple quotient $\fs = \fg/\fr$ of $\fg$ by the radical $\fr \subseteq \fg$ 
(which was handled in \cite{CJKM23}).
\item[2)] The action of $\fs$ on a set of real irreducible representations that arise from quotients of the commutator series of $\fr$.
\end{itemize}
In particular, we show that $c(G) = 1$ for unimodular connected solvable Lie groups.
\end{abstract}
%{\bf MSC2020:} 22E15, 43A15, 43A22, 22D25, 46L51.

\section{Introduction}

For a unimodular locally compact group $G$, the left regular representation $L \colon G \rightarrow U(L_2(G))$
defined by $(L(s)\psi)(t) = \psi(s^{-1}t)$ is unitary. For $f \in L_1(G)$ we define $L(f)\psi(s) = \int_{G}f(t)\psi(t^{-1}s)\mu(dt)$ 
by integration against the Haar measure $\mu$. 
The group von Neumann algebra $\cL(G) := L(G)''$ then admits a unique normal semi-finite faithful trace $\tau$, which satisfies
\begin{equation}
	\tau(L(f)^*L(f)) = \langle f, f\rangle_{L_2(G)}
\end{equation}
for $f \in L_1(G)\cap L_2(G)$. This gives rise to the $L_p$-norm $\|x\|_p := \tau(|x|^p)^{1/p}$, and  
the noncommutative $L_p$-space $L_p(\widehat{G})$ is defined as the completion of the space 
$\{x \in \cL(G) \,;\, \|x\|_{p} < \infty\}$ with respect to the norm $\|\,.\,\|_{p}$.

A continuous bounded function $m \colon G \rightarrow \C$ is a \emph{$p$-multiplier} if there exists a bounded linear map 
$T_m \colon L_p(\widehat{G}) \rightarrow L_p(\widehat{G})$ such that $T_m L(f) = L(mf)$ for all $f$ in the twofold convolution product
$C_c(G)*C_c(G)$ of the compactly supported continuous functions \cite{Kunze58, Cooney10, Caspers13}. 
If it exists, it is unique because $\{L(f)\,;\, f \in C_c(G)*C_c(G)\}$
is dense in $L_p(G)$.

To justify the above notations and terminology, note that if $G$ is abelian, then $L_p(\widehat{G})$ coincides with the space of 
$L_p$-functions on the Pontryagin dual $\widehat{G}$ of $G$. The map $T_m$ is then obtained by applying the Fourier transform from 
$G$ to $\widehat{G}$, multiplying with $m$, and then applying the inverse Fourier transform. 

In general, the problem of determining 
whether a given function $m$ is a Fourier multiplier is quite nontrivial even in the commutative case \cite{Stein, GrafakosBook1, GrafakosBook2}. 
In the noncommutative case, recent progress was obtained in \cite{JMPGafa, JungeGonzalezParcet, JMPJEMS, PRS, MeiRicardXu, MeiRicard}, 
see also \cite{CJKM23} for a more extensive introduction to this topic.

In this connection, it is natural to study the relation between the $L_p$-norm \[\|T_m \colon L_p(\widehat{G}) \rightarrow L_p(\widehat{G})\|\]
of a Fourier multiplier $m \colon G \rightarrow \C$ and the $L_p$-norm \[\|T_{m|_{\Gamma}} \colon L_p(\widehat{\Gamma}) \rightarrow L_p(\widehat{\Gamma})\|\]
of its restriction $m_{\Gamma} \colon \Gamma \rightarrow \C$ to a discrete subgroup $\Gamma \subseteq G$.
It is a celebrated theorem of DeLeeuw \cite{DeLeeuw} that these two norms are the same for the inclusion $\Z \subset \R$.
In the noncommutative case, the following result in this direction was obtained in \cite{CJKM23}, relying heavily on on techniques from \cite{CPPR15}.

\begin{Definition}\label{Def:GroupLevel}
Let $G$ be a locally compact group with left Haar measure~$\mu$, 
and let $F \subseteq G$ be a subset. For an open, relatively compact subset $U \subseteq G$, we define
\[
\delta_F(U)  := \frac{\mu( \cap_{g \in F} \;gUg^{-1} )}{ \mu(U)}.
\]
For an open neighbourhood basis $\mathcal{V}$ of the identity of $G$ that consists of symmetric sets $U = U^{-1}$, we define 
$\delta_F(\mathcal{V}) := \liminf_{U \in \mathcal{V}}  \delta_F(U)$. Then $\delta_F$ is defined as the supremum of $\delta_F(\mathcal{V})$ over all open, symmetric neighbourhood bases  $\mathcal{V}$ of the identity.
\end{Definition}

For a subset $A \subseteq G$, we further define
\begin{equation}\label{eq:defc}
c( A ) := \inf \{ \delta_F^{\frac{1}{2}}\,;\, F \subseteq A,  F \: \mathrm{finite}\}.
\end{equation}
We then have the following bound 
on $\Vert T_{m \vert_\Gamma}\colon L_p(\widehat{\Gamma})   \rightarrow L_p(\widehat{\Gamma}) \Vert$
in terms of $c(\mathrm{supp}(m\vert_\Gamma) )$ \cite[Theorem~A]{CJKM23}.

\begin{Theorem}[Local noncommutative De Leeuw Theorem]\label{Thm:ncDeLeeuw}
Let $G$ be a unimodular second countable locally compact group, and let
$m \colon G \rightarrow \C$ be a bounded, continuous function. Then for every $1 \leq  p < \infty$ we have
\begin{equation}\label{eq:Lpestimate}
c(\mathrm{supp}(m\vert_\Gamma) )
\cdot \Vert T_{m \vert_\Gamma}\colon L_p(\widehat{\Gamma})   \rightarrow L_p(\widehat{\Gamma}) \Vert \leq
\Vert T_{m  }\colon L_p(\widehat{G}) \rightarrow L_p(\widehat{G}) \Vert.
\end{equation}
\end{Theorem}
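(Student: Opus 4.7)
The overall strategy is a transference argument: for each finite $F \subseteq \mathrm{supp}(m|_\Gamma)$ and each sufficiently small symmetric open neighbourhood $U$ of the identity in $G$, I would construct a linear map $\Phi_U \colon \cL(\Gamma) \to \cL(G)$, defined on finite sums by
\[
\Phi_U\!\left(\sum_\gamma a_\gamma \lambda_\Gamma(\gamma)\right) = \mu(U)^{-1/2}\sum_\gamma a_\gamma L(\mathbf{1}_{\gamma U}),
\]
which is almost $L_p$-isometric on $F$-supported elements, with loss factor $\delta_F(U)^{1/2}$, and which approximately intertwines $T_{m|_\Gamma}$ with $T_m$ on such elements. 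Once both properties are in place,
\[
\delta_F(U)^{1/2}\|T_{m|_\Gamma}\colon L_p(\widehat{\Gamma}) \to L_p(\widehat{\Gamma})\| \leq \|T_m\colon L_p(\widehat{G}) \to L_p(\widehat{G})\| + o(1)
\]
as $U$ shrinks along an optimal neighbourhood basis. Passing to the supremum over bases and the infimum over finite $F$ then yields the theorem via the definition of $c(\mathrm{supp}(m|_\Gamma))$.

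The intertwining estimate is the easy half. Continuity of $m$ lets us choose $U$ small enough that $|m - m(\gamma)| < \epsilon$ on each $\gamma U$ for $\gamma \in F$, so
\[
T_m\bigl(\Phi_U(\lambda_\Gamma(\gamma))\bigr) = \mu(U)^{-1/2} L(m\,\mathbf{1}_{\gamma U}) \approx \Phi_U\bigl(m(\gamma)\lambda_\Gamma(\gamma)\bigr),
\]
with error that vanishes uniformly on $F$-supported inputs as $U$ shrinks. The $L_2$-version of the isometry is also clean: $\tau(\Phi_U(\lambda_\Gamma(\gamma))^*\Phi_U(\lambda_\Gamma(\gamma'))) = \mu(U)^{-1}\mu(\gamma U \cap \gamma' U)$ vanishes for $\gamma \neq \gamma'$ once $U$ is small, and equals $1 = \tau(\lambda_\Gamma(\gamma)^*\lambda_\Gamma(\gamma'))$ otherwise.

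The technical heart, and the source of the loss factor $\delta_F(U)^{1/2}$, is upgrading this $L_2$ isometry to general $p$. Comparing $\|x\|_p$ with $\|\Phi_U(x)\|_p$ for a finite $F$-supported combination $x$ forces one to analyse traces of products $L(\mathbf{1}_{\gamma_1 U})^{\epsilon_1}\cdots L(\mathbf{1}_{\gamma_k U})^{\epsilon_k}$, whose evaluation brings in measures of intersections of conjugates such as $\bigcap_{\gamma \in F}\gamma U \gamma^{-1}$: this is exactly the quantity $\delta_F(U)\mu(U)$ of Definition~\ref{Def:GroupLevel}. The natural framework, as in \cite{CPPR15}, is to realise $\Phi_U$ as an approximately-isometric corner of an embedding of $\cL(\Gamma)$ into $\cL(G) \otimes B(L_2(D))$ for a measurable fundamental domain $D$ of $\Gamma \backslash G$ (whose existence uses unimodularity and second countability), in such a way that $T_m$ corresponds to the multiplier action on the first factor only; interpolation between $p = 2$ and $p = \infty$, or direct H\"older estimates in Haagerup $L_p$-spaces, then produces the loss factor $\delta_F(U)^{1/2}$. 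Controlling how conjugation fails to preserve $U$ --- precisely what $\delta_F$ measures --- is the most delicate step.
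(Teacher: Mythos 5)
First, a point of order: this paper does not prove Theorem~\ref{Thm:ncDeLeeuw} at all; it is quoted verbatim as \cite[Theorem~A]{CJKM23}, so the only proof to measure your sketch against is the one in that reference (which in turn rests on \cite{CPPR15}). At the level of strategy your sketch is aligned with it: testing $T_m$ against elements of the form $\mu(U)^{-1/2}\sum_\gamma a_\gamma L(\mathbf{1}_{\gamma U})$, the exact $L_2$ orthogonality computation for small $U$, the approximate intertwining via continuity of $m$, and the recognition that $\delta_F(U)$ quantifies the failure of $U$ to commute with $F$ are all genuinely the right ingredients.

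The problem is that the entire content of the theorem sits in the step you call ``the technical heart'' and then do not carry out. Concretely: (i) the constant in \eqref{eq:Lpestimate} is $\delta_F(U)^{1/2}$ \emph{uniformly in $p$}, whereas interpolating between an isometry at $p=2$ and a $\delta_F(U)$-type bound at an endpoint would produce a $p$-dependent exponent $\delta_F(U)^{\theta(p)}$ rather than the uniform $1/2$; moreover one cannot interpolate \emph{lower} bounds for a non-surjective map without additional structure, so the claimed ``loss factor via interpolation'' is not a proof outline but a restatement of the goal. The mechanism in \cite{CJKM23} is more specific: one passes from $U$ to $U_0=\bigcap_{\gamma\in F}\gamma U\gamma^{-1}$, for which $U_0\gamma\subseteq\gamma U$ for all $\gamma\in F$, so that right translates of $U_0$ are dominated by left translates of $U$; the factor $\delta_F(U)^{1/2}=(\mu(U_0)/\mu(U))^{1/2}$ is exactly the $L_2$-normalization deficit of $\mathbf{1}_{U_0}$ relative to $\mathbf{1}_U$, and the comparison of $L_p$-norms for all $p$ simultaneously is run through completely positive compressions in the $\cL(G)\,\bar\otimes\, B(L_2(X))$ picture, not through interpolation. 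Your sketch names that framework but supplies none of the estimate. (ii) Two smaller but real issues: the identity $T_mL(f)=L(mf)$ is only postulated for $f\in C_c(G)*C_c(G)$, so $T_mL(\mathbf{1}_{\gamma U})=L(m\mathbf{1}_{\gamma U})$ requires an approximation argument; and for $p<2$ it is not even clear that $L(\mathbf{1}_{\gamma U})$ lies in $L_p(\widehat G)$, which is one reason the published proof works with two-sided compressions rather than the one-sided map $\Phi_U$ as you define it.
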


The strength of this result is that the constants $\delta_{F}$, and hence the constants $c(A)$, can be determined explicitly in many interesting situations.
For real reductive Lie groups $G$ (which are automatically unimodular), the following explicit bound on $\delta_{F}$ was obtained in \cite[Theorem~B]{CJKM23}.
It results in nontrivial bounds for \eqref{eq:Lpestimate} in the \emph{local} case, where $m\colon G \rightarrow \C$
is compactly supported.

Let $\fg$ be the Lie algebra of $G$, let $\mathrm{Ad} \colon G \rightarrow \mathrm{GL}(\fg)$ be the adjoint representation, 
and let $B_{\theta}(x,y) = -B(x, \theta y)$ be the inner product derived from the invariant bilinear form $B$ and the Cartan involution $\theta$.

\begin{Theorem}[Reductive groups]\label{Thm:boundReductiveGroup}
Let G be a real reductive Lie group, and let $F \subseteq G$ be a relatively compact, symmetric subset that contains the identity.
Then 
\begin{equation} 
\delta_{F}  \geq \rho^{-d/2}, 
\end{equation}
where $\rho := \sup_{g\in F}\|\Ad_g\|$ is the maximal norm of $F$ in the adjoint representation with respect to the inner product $B_{\theta}$, and 
$d$ is the maximal dimension of a nilpotent adjoint orbit.
\end{Theorem}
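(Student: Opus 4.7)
My plan is to reduce the problem to a linear-algebraic volume estimate in $\fg$ via the exponential map, and then to exploit the restricted root space decomposition together with a dimensional bound coming from nilpotent orbits.

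First I would take the neighbourhood basis $\mathcal{V} = \{\exp(tB)\,:\, t > 0\}$, where $B \subset \fg$ is the $B_\theta$-unit ball. The identity $g\exp(Y)g^{-1} = \exp(\Ad_g Y)$ combined with the fact that $\exp$ has unit Jacobian at $0$ gives $\delta_F(\exp(tB)) \to \vol\bigl(\bigcap_{g \in F}\Ad_g B\bigr)/\vol(B)$ as $t \to 0^+$, so that taking the supremum over such bases reduces the problem to
\[
\vol(B_F)/\vol(B) \geq \rho^{-d/2}, \qquad B_F := \{Y \in \fg\,:\, \sup_{g \in F} \|\Ad_g Y\|_{B_\theta} \leq 1\}.
\]

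For the main estimate I would use the Cartan decomposition $g = k_1 \exp(X_g) k_2$ with $X_g \in \mathfrak{p}$ and $k_1, k_2 \in K$. Since $B_\theta$ is $\Ad_K$-invariant and $\ad X_g$ is $B_\theta$-symmetric (because $X_g \in \mathfrak{p}$), the operator $\Ad_{\exp(X_g)}$ diagonalises with eigenvalues $e^{\alpha(X_g)}$ in the restricted root decomposition $\fg = \fg_0 \oplus \bigoplus_\alpha \fg_\alpha$ of a maximal abelian $\fa \ni X_g$. In the clean single-element symmetric case $F = \{e,g,g^{-1}\}$ with $g \in \exp(\mathfrak{p})$, so that $k_1 = k_2 = e$, one checks that $B_F$ contains the ellipsoid $\{Y\,:\,\sum_\alpha e^{2|\alpha(X_g)|}\|Y_\alpha\|^2 \leq 1\}$, whose volume ratio to $B$ is $\exp(-\sum_\alpha|\alpha(X_g)|\dim\fg_\alpha)$. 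The structural inequality
\[
\sum_\alpha|\alpha(X_g)|\dim\fg_\alpha \;\leq\; \alpha_{\max}(X_g)\cdot(\dim\fg - \dim\fg_0) \;=\; (\log\rho)(d/2),
\]
together with the identification of $\dim\fg - \dim\fg_0$ with (half) the maximal nilpotent orbit dimension — the orbit of a principal nilpotent $N \in \bigoplus_{\alpha>0}\fg_\alpha$ having centraliser $\fg_0$ in $\fg$ — then yields the desired bound $\rho^{-d/2}$.

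The main obstacle will be extending this to a general symmetric $F \subseteq G$ whose elements are neither in $\exp(\mathfrak{p})$ nor mutually commuting. Different $g \in F$ have Cartan decompositions with different conjugate copies $\Ad_{k_1^g}\fa$ of the maximal abelian, and the rotating $K$-factors $k_1, k_2$ on the left and right conjugate the eigenspaces differently for $g$ and for $g^{-1}$, so the constraints cutting out $B_F$ are not simultaneously diagonal in any single basis. My plan is to exploit the coordinate-free pointwise inequality $\|\Ad_g Y\|_{B_\theta} \leq \rho\|Y\|_{B_\theta}$ combined with the structural fact that the centraliser $\fg^F := \{Y \in \fg : \Ad_g Y = Y \text{ for all } g \in F\}$ always contains a Cartan subalgebra of $\fg_\C$, so that the ``non-trivially acted-upon'' directions span at most $d/2$ dimensions uniformly in $F$. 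Turning this uniform dimensional bound into a sharp volume estimate — recovering the clean single-element bound up to losses that are absorbed by the $\liminf$--supremum definition of $\delta_F$ — is the crux of the argument and where the real technical work lies.
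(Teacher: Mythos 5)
The linearization step is fine (it is essentially the paper's own Linearization Proposition), but the core of your argument fails already in the ``clean single-element case'' that you present as settled. Your structural inequality rests on the identification $\dim\fg - \dim\fg_0 = d/2$, and this is false. The ellipsoid $\{Y : \sum_\alpha e^{2|\alpha(X_g)|}\|Y_\alpha\|^2 \le 1\}$ yields the volume ratio $\exp\bigl(-\sum_\alpha |\alpha(X_g)|\dim\fg_\alpha\bigr) \ge \rho^{-(\dim\fg - \dim\fg_0)}$, and for a split group one has $\dim\fg - \dim\fg_0 = \dim\fg - \mathrm{rank}(\fg) = d$, not $d/2$ (your own parenthetical remark --- that the principal nilpotent orbit has codimension $\dim\fg_0$ --- would give $d$, contradicting the ``$=d/2$'' you then assert). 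Concretely, for $G = \SL_2(\R)$ and $g = \exp(t\,\mathrm{diag}(1,-1))$ one has $\rho = e^{2t}$ and $d=2$, your ellipsoid gives the ratio $e^{-4t} = \rho^{-2} = \rho^{-d}$, while the theorem asserts $\rho^{-1} = \rho^{-d/2}$. This is not a defect of the ellipsoid alone: summing the constraints $\|\Ad_{g^{\pm 1}}Y\|^2 \le r^2$ shows that $\bigcap_{g\in F}\Ad_g(B_r(0))$ is contained in $\{Y_e^2 + Y_f^2 \le 2e^{-4t}r^2\}$, so \emph{any} neighbourhood basis of balls produces at most a constant times $\rho^{-d}$ here. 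The same factor-of-two loss recurs in your final paragraph: the ``non-trivially acted-upon'' directions number $\dim\fg - \mathrm{rank}(\fg) \ge d$, not $d/2$.

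The exponent $d/2$ cannot be reached by counting eigenvalue directions; it is obtained by choosing neighbourhoods adapted to the adjoint orbit structure, namely sets of the form $\bigl(\Ad(G)B_r(0)\bigr)\cap B_R(0)$ with $r \ll R$, whose volume concentrates along the nilpotent cone and changes only logarithmically under $\Ad_g$. Heuristically, the orbit directions cost nothing and only a Lagrangian's worth ($d/2$, half the symplectic dimension of the maximal nilpotent orbit) of transverse directions is compressed. This is precisely the mechanism of the motivating example in \S 4.2.1 of the present paper, where the hyperbola neighbourhoods $U_{R,\varepsilon}$ beat the ellipsoid bound $\rho^{-2}$ and give $\delta^{\R^2}_F = 1$; see also \S 5.3, which notes that this Ansatz is the one used in the source. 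Be aware, too, that the present paper does not prove this theorem at all --- it quotes it from [CJKM23, Theorem~B], where the proof occupies \S\S 8--9 --- so your proposal should be measured against that argument; in any case, without the orbital neighbourhoods your approach cannot close the gap between $\rho^{-d}$ and $\rho^{-d/2}$.
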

%This yields a nontrivial bound in the \emph{local} case, where $m \colon G \rightarrow \C$ is compactly supported \cite{CJKM23}.
% or, more generally, if the image of $\mathrm{supp}(m|_{\Gamma})$ is bounded in the adjoint representation.

The aim of the present paper is to develop methods for bounding $\delta_F$ for connected Lie groups $G$ beyond the 
class of reductive Lie groups.
In view of Theorem~\ref{Thm:ncDeLeeuw}, this immediately translates to bounds on the $L_p$ norm 
of a Fourier multiplier $m|_{\Gamma}$ for discrete subgroups $\Gamma \subseteq G$.
We show that $c(G) = 1$ for unimodular connected solvable Lie groups
(Corollary~\ref{Thm:Solvable}). More generally, 
 for an arbitrary connected Lie group $G$, we decompose $\delta_{F}$ into a bound 
that depends only on the adjoint representation of the Levi subalgebra $\fs \subseteq \fr$ (where Theorem~\ref{Thm:boundReductiveGroup} applies), 
and bounds that arise from the decomposition of the radical $\fr \subseteq \fg$ into irreducible representations for the Levi factor $\fs$.
We describe the remaining open problems, and indicate a direction of further research.

\paragraph{Acknowledgements} We would like to thank the anonymous referee for the careful reading, and for valuable suggestions.

\section{Linearization}

Let $G$ be a Lie group, and let $(\pi, V)$ be a continuous representation of $G$ on a finite-dimensional real vector space $V$. We denote the Lebesgue measure 
on $V$ by $\Lambda$. 
A subset $F\subseteq G$ is called \emph{symmetric} if $F = F^{-1}$, and $U\subseteq V$ is called symmetric if $U = -U$.
The following is a linearized version of Definition~\ref{Def:GroupLevel}.
\begin{Definition}\label{def:LAlevel}
For a relatively compact subset $F \subseteq G$ and a relatively compact open neighbourhood $U \subseteq V$ of $0\in V$, we define 
\begin{equation}
\delta^{V}_{F}(U) := \frac{\Lambda\big(\cap_{g\in F}\pi(g)(U)\big)}{\Lambda(U)}.
\end{equation}
For a neighbourhood basis $\cV$ of $0\in V$ that consists of symmetric open sets, we set
$
\delta^{V}_{F}(\cV) := \lim\inf_{U\in \cV}\delta^V_{F}(U),
$
and we define $\delta^V_{F}$ to be the supremum of $\delta^{V}_{F}(\cV)$ over all such neighbourhood bases.
\end{Definition}

According to the following minor generalization of \cite[Prop.~8.5]{CJKM23}, 
it suffices to consider the above linearized version of $\delta_{F}$ for the adjoint representation.
% $(\mathrm{Ad}, \fg)$ of $G$.

\begin{Proposition}[Linearization]
For a Lie group $G$, we have $\delta_{F} = \delta_F^{\fg}$ for the adjoint representation $\fg$ of $G$.
\end{Proposition}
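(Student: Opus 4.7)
The plan is to use the exponential map $\exp \colon \fg \to G$ as a bridge between the group-level and linearized definitions. Two facts make this work: the conjugation identity $g\,\exp(x)\,g^{-1} = \exp(\Ad_g(x))$, which converts conjugation by $g \in G$ into the adjoint action on $\fg$; and the comparison between Haar measure $\mu$ on $G$ and Lebesgue measure $\Lambda$ on $\fg$. Concretely, on a sufficiently small open neighbourhood $W_0$ of $0 \in \fg$ on which $\exp$ is a diffeomorphism onto its image, there is a continuous positive density $J$ with $J(0) = 1$ satisfying $\mu(\exp(A)) = \int_A J\,d\Lambda$ for every Borel $A \subseteq W_0$.

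Given such a $W_0$, relative compactness of $F$ (so that $\sup_{g \in F}\|\Ad_g^{\pm 1}\| < \infty$) allows one to shrink $W_0$ further so that $\Ad_g(W_0) \subseteq W_0$ for all $g$ in the closure of $F$. For any symmetric open $U \subseteq W_0$, the conjugation identity together with injectivity of $\exp|_{W_0}$ then yields
$$\bigcap_{g \in F} g\,\exp(U)\,g^{-1} \;=\; \exp\Bigl(\bigcap_{g \in F}\Ad_g(U)\Bigr),$$
and $\exp(U)$ is symmetric in $G$ because $\exp(-x) = \exp(x)^{-1}$. Thus any symmetric open neighbourhood basis $\cV^{\fg}$ of $0 \in \fg$ with members in $W_0$ yields a symmetric open neighbourhood basis $\cV := \{\exp(U) \,:\, U \in \cV^{\fg}\}$ of $e \in G$, and conversely any symmetric basis of $e$ can, after refining its members to lie inside $\exp(W_0)$, be pulled back via $\exp^{-1}$ to a symmetric basis of $0 \in \fg$. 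Since the $\liminf$ only depends on the tail, this passage produces the same supremum on both sides.

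The remaining task is to compare $\delta_F^{\fg}(\cV^{\fg})$ with $\delta_F(\cV)$ for corresponding bases. Applying the Jacobian formula to numerator and denominator gives
$$\frac{\mu\bigl(\exp(\cap_{g \in F} \Ad_g(U))\bigr)}{\mu(\exp(U))} \;=\; \frac{\int_{\cap_{g \in F} \Ad_g(U)} J\,d\Lambda}{\int_U J\,d\Lambda},$$
and continuity of $J$ at $0$ shows this equals $\Lambda(\cap_{g \in F}\Ad_g(U))/\Lambda(U)$ up to a factor $1+o(1)$ as $U$ shrinks. Taking $\liminf$ then gives $\delta_F(\cV) = \delta_F^{\fg}(\cV^{\fg})$, and supping over bases yields $\delta_F = \delta_F^{\fg}$.

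The main technical point is ensuring that the $o(1)$ factor above is truly uniform along the cofinal family of $U$'s. Given $\varepsilon > 0$, one must find a subneighbourhood $W_\varepsilon \subseteq W_0$ on which $|J-1| < \varepsilon$, and then restrict to $U$ small enough that $\Ad_g(U) \subseteq W_\varepsilon$ for every $g \in F$ \emph{simultaneously}; this last step is exactly where the finiteness of $\sup_{g \in \overline{F}}\|\Ad_g\|$ is essential. Once this uniformity is in place, the two $\liminf$ quantities literally coincide, finishing the identification $\delta_F = \delta_F^{\fg}$.
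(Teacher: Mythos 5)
Your argument is essentially the paper's own proof: linearize via $\exp$, use the equivariance $g\exp(x)g^{-1}=\exp(\Ad_g(x))$, and compare $\mu\circ\exp$ with Lebesgue measure through a density equal to $1$ at the origin; both the correspondence of symmetric bases and the uniform $(1\pm\varepsilon)$ Jacobian estimate appear in the same way in the paper. One local slip: you cannot in general shrink $W_0$ so that $\Ad_g(W_0)\subseteq W_0$ for all $g\in\overline{F}$ (if some $\Ad_g$ has an eigenvalue of modulus greater than $1$, no bounded neighbourhood of $0$ is mapped into itself). What you actually need, and what your hypotheses already provide, is that every sufficiently small basis member $U$ satisfies $\Ad_g(U)\subseteq W_0$ for all $g\in F$ simultaneously --- take $U\subseteq B_{r/\rho}(0)$ with $B_r(0)\subseteq W_0$ and $\rho=\sup_{g\in\overline{F}}\|\Ad_g\|$ --- so that injectivity of $\exp$ on $W_0$ still gives $\bigcap_{g\in F}g\exp(U)g^{-1}=\exp\bigl(\bigcap_{g\in F}\Ad_g(U)\bigr)$; since the $\liminf$ depends only on the tail of the basis, this repair is harmless and the rest of your argument goes through.
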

\begin{proof}
Since the exponential map $\exp \colon \fg \rightarrow G$ is a local diffeomorphism around zero, we may assume 
without loss of generality that the sets $U_G \subseteq G$ in the neighbourhood basis $\cV_{G}$ for $1\in G$ correspond
to sets $U_{\fg} \subseteq \fg$ in a neighbourhood basis $\cV_{\fg}$ for $0\in \fg$ by 
$U_{G} = \exp(U_{\fg})$, and that the restriction $\exp \colon U_{\fg} \rightarrow U_{G}$ is a diffeomorphism.
Since the exponential map is $G$-equivariant (so $\exp \circ \mathrm{Ad}_{g}(x) = g \exp(x)g^{-1}$ for all $g\in G$ and $x\in \fg$), 
we have 
\[
\frac{\mu(\cap_{g\in F} \;g U_{G} g^{-1})}{\mu(U_{G})} = \frac{\mu\circ \exp (\cap_{g\in F} \Ad_g(U_{\fg}))}{\mu\circ \exp (U_{\fg})},
\]
so it remains to compare $\mu \circ \exp$ to the Lebesgue measure on small zero-neighbourhoods.

Let $\vol_{G}$ be a left invariant volume form on $G$ such that $\mu(U) = \int_{U}\vol_{G}$ for open relatively compact subsets $U\subseteq G$, 
and let $\vol_{\fg}$ be a translation invariant volume form on $\fg$ such that $\exp^*\vol_{G}|_{0} = \vol_{\fg}|_{0}$ at the origin.
Then $\exp^*\vol_G = \nu \vol_{\fg}$ for a smooth function $\nu \colon \fg \rightarrow \R$ with $\nu(0) =1$. 
(In fact, $\nu(x) = \mathrm{det}(D^{L}_{x}\exp)$ is the determinant of the left logarithmic derivative 
\[
D^L\exp_{x} = \frac{\mathrm{Id}_{\fg} - \exp(-\ad_{x})}{\ad_x}
\]
of the exponential map \cite[Prop.~8.5]{CJKM23}, but we will not need this fact here.)
Since $\nu$ is smooth on $\fg$, there exists a radius $R>0$ and a
constant $C>0$ such that $|\nu(x) - 1| \leq C \|x\|$ for all $x$ in the ball $B_{R}(0)$ of radius $R$
around zero.
If $U_{\fg}$ is contained in $B_{\varepsilon}(0)$ with $0<\varepsilon<R$, we then have 
$(1-\varepsilon) \Lambda(W) < \mu \circ \exp(W) \leq (1 + \varepsilon) \Lambda(W)$ for all 
open $W \subseteq U_{\fg}$. It follows that
\[
\left(
\frac{1-\varepsilon}{1+\varepsilon}
\right)
\frac{\Lambda (\cap_{g\in F} \Ad_g(U_{\fg}))}{\Lambda (U_{\fg})}
\leq
\frac{\mu\circ \exp (\cap_{g\in F} \Ad_g(U_{\fg}))}{\mu\circ \exp (U_{\fg})}
 \leq 
 \left(
 \frac{1+\varepsilon}{1-\varepsilon}
 \right)
\frac{\Lambda (\cap_{g\in F} \Ad_g(U_{\fg}))}{\Lambda (U_{\fg})}.
\]
Since any ball $B_{\varepsilon}(0)$ contains an open set $U_{\fg}$ from $\cV$, the result follows.
\end{proof}

\section{Trivial bounds and the Reduction Lemma}

After proving a trivial bound on $\delta^{V}_{F}$, %in Section~\ref{sec:trivialbound}, 
we prove a Reduction Lemma that bounds $\delta^{V}_{F}$ in terms of $\delta^{W}_{F}$ and $\delta^{V/W}_{F}$
for every subrepresentation $W\subseteq V$.
If $G$ is a connected Lie group with Lie algebra $\fg$, then every ideal $\fh \subseteq \fg$ is a subrepresentation 
of the adjoint representation. In particular, the Reduction Lemma can be used to bound $\delta^{\fg}_{F}$ 
in terms of $\delta^{\fh}_{F}$ and $\delta^{\fg/\fh}_{F}$.

%Let $G$ be a Lie group, and let $(\pi, V)$ be a continuous representation of $G$ on a finite-dimensional real vector space $V$.
%Let $\Lambda$ be the Lebesgue measure on $V$.
%For an arbitrary subset $F \subseteq G$ and a relatively compact open neighbourhood $U \subseteq V$ of $0\in V$, we define 
%\begin{equation}
%\delta^{V}_{F}(U) := \frac{\Lambda\big(\cap_{g\in F}\pi(g)(U)\big)}{\Lambda(U)}.
%\end{equation}
%For a neighbourhood basis $\cV$ of $0\in V$ that consists of \emph{symmetric} open sets (meaning ${U = -U}$ for all $U\in \cV$), we set
%$
%\delta^{V}_{F}(\cV) := \lim\inf_{U\in \cV}\delta^V_{F}(U),
%$
%and we define $\delta^V_{F}$ to be the supremum of $\delta^{V}_{F}(\cV)$ over all such neighbourhood bases.
%A subset $F\subseteq G$ is called \emph{symmetric} if $F = F^{-1}$.

\subsection{The trivial bound}\label{sec:trivialbound}
We equip $V$ with an inner product, and define 
\begin{equation}
\rho := \sup_{g\in F} \|\pi(g)\|.
\end{equation} 
We then obtain the following trivial bound on $\delta_{F}^{V}$. 
\begin{Proposition}[Trivial bound]\label{Prop:trivialbound}
For every relatively compact, symmetric subset $F\subseteq G$, we have
\begin{equation}\label{eq:TrivialBound}
\delta_{F}^{V} \geq \rho^{-\mathrm{dim}(V)}.
\end{equation}
\end{Proposition}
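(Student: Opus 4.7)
The plan is to use balls $B_r(0) \subseteq V$ (with respect to the chosen inner product) as an explicit symmetric open neighbourhood basis of $0$, and show that for each such ball one has $\delta^V_F(B_r(0)) \geq \rho^{-\dim(V)}$. The key point is that symmetry of $F$, combined with the norm bound $\rho$, gives a matching lower bound for each $\pi(g)$ applied to a ball.

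First I would observe that since $F = F^{-1}$, every $g \in F$ satisfies $g^{-1} \in F$, so $\|\pi(g)^{-1}\| = \|\pi(g^{-1})\| \leq \rho$. Consequently, for any $y \in V$ with $\|y\| < r/\rho$, the vector $x := \pi(g)^{-1}(y)$ has norm $\|x\| \leq \rho \|y\| < r$, which shows that $y \in \pi(g)(B_r(0))$. Thus
\begin{equation}
B_{r/\rho}(0) \subseteq \pi(g)(B_r(0)) \qquad \text{for every } g \in F.
\end{equation}
Intersecting over $g \in F$ gives $B_{r/\rho}(0) \subseteq \bigcap_{g \in F} \pi(g)(B_r(0))$.

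Taking the Lebesgue measure of both sides and using that scaling by $\rho^{-1}$ multiplies $\dim(V)$-dimensional volume by $\rho^{-\dim(V)}$, we obtain
\begin{equation}
\Lambda\bigl(\cap_{g \in F} \pi(g)(B_r(0))\bigr) \geq \Lambda(B_{r/\rho}(0)) = \rho^{-\dim(V)} \Lambda(B_r(0)),
\end{equation}
so $\delta_F^V(B_r(0)) \geq \rho^{-\dim(V)}$ for every $r > 0$. Since the collection $\cV = \{B_r(0) : r > 0\}$ is a symmetric open neighbourhood basis of $0 \in V$, the liminf over $\cV$ also satisfies $\delta^V_F(\cV) \geq \rho^{-\dim(V)}$, and hence the supremum defining $\delta^V_F$ inherits the same bound.

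There is no serious obstacle here; the only subtlety is remembering to invoke symmetry of $F$ so that the inverse operator norm is controlled by $\rho$, which is what allows us to pass from the upper bound $\pi(g)(B_r(0)) \subseteq B_{\rho r}(0)$ (not useful for a lower volume bound) to the lower bound $\pi(g)(B_r(0)) \supseteq B_{r/\rho}(0)$.
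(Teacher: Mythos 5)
Your proposal is correct and follows essentially the same route as the paper: both use the neighbourhood basis of balls $B_r(0)$, invoke the symmetry of $F$ to bound $\|\pi(g)^{-1}\| = \|\pi(g^{-1})\| \leq \rho$, deduce the inclusion $B_{r/\rho}(0) \subseteq \bigcap_{g\in F}\pi(g)(B_r(0))$, and conclude by the scaling behaviour of Lebesgue measure. The only cosmetic difference is that the paper phrases the inclusion as $B_r(0) \subseteq \bigcap_{g\in F}\pi(g)(B_{\rho r}(0))$ before rescaling, whereas you state it directly at radius $r/\rho$.
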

\begin{proof}
Let $\cV$ be the neighbourhood basis consisting of open balls $B_{r}(0)$ of radius~$r$.
Since $\pi(g)(B_{r}(0)) \subseteq B_{\rho r}(0)$, we have 
$B_{r}(0) \subseteq \pi(g^{-1})B_{\rho r}(0)$ for all $g\in F$. Since $F$ is symmetric ($F = F^{-1}$), 
we have $B_r(0) \subseteq \bigcap_{g\in F} \pi(g)B_{\rho r}(0)$, and 
\[
\frac{\Lambda\Big(\bigcap_{g\in F} \pi(g)B_{r}(0)\Big)}{\Lambda(B_{r}(0))} \geq \frac{\Lambda(B_{r/\rho}(0))}{\Lambda(B_{r}(0))} = \rho^{-\mathrm{dim}(V)}
\]
as required.
\end{proof}
\begin{Remark}
Note that unlike the quantity $\delta_{F}^{V}$, 
the bound \eqref{eq:TrivialBound} depends on the choice of inner product.
\end{Remark}
If $A$ is a symmetric, relatively compact subset of a Lie group $G$, then 
the constant $c(A)$ from \eqref{eq:defc} satisfies $c(A) \geq \rho^{-\frac{1}{2}\mathrm{dim}(\fg)}$ 
for $\rho = \sup_{g\in A}\|\mathrm{Ad}_{g}\|$. In particular, this shows that if $m$ is compactly supported, the bound \eqref{eq:Lpestimate} 
is not vacuous.
\subsection{The Reduction Lemma}\label{sec:reductionlemma}

If a subrepresentation $W\subseteq V$ admits a complementary $G$-representation (so that $V$ decomposes as 
$V = W \oplus W'$), then it is not hard 
to see that 
\[\delta^{V}_{F} \geq \delta^{W}_{F} \delta_{F}^{V/W}.\]
 By the following lemma, this remains true 
even if $W$ is \emph{not} complemented as a $G$-representation.

\begin{Lemma}[Reduction Lemma]\label{Lemma:BenjaminsTruc}
Let $F\subseteq G$ be a relatively compact, symmetric subset. Let $W \subseteq V$ be a subrepresentation, and let $\cV_{V/W}$ and $\cV_{W}$
be open neighbourhood bases of the origin in $V/W$ and $W$, respectively. 
Then there exists an open neighbourhood basis $\cV$ of the origin in $V$ such that 
\begin{equation}\label{eq:BenjaminstrucVergelijking}
\delta^{V}_{F}(\cV) \geq \delta^{V/W}_{F}(\cV_{V/W})\delta^{W}_{F}(\cV_{W}),
\end{equation}
and such that $\cV_{V}$ is symmetric if $\cV_{V/W}$ and $\cV_{W}$ are symmetric.
\end{Lemma}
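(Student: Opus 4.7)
The plan is to linearize the projection $V \to V/W$ via a (non-equivariant) linear section, apply Fubini over the quotient to decompose the volume of $\bigcap_{g \in F} \pi(g) U$, and absorb the resulting cocycle-like shift into a small Minkowski thickening of the $W$-factor. I would fix a linear section $\sigma \colon V/W \to V$ of the projection $\pi\colon V \to V/W$, identifying $V \cong W \oplus \sigma(V/W)$ as vector spaces. In these coordinates the $G$-action becomes upper-triangular,
\[
\pi(g)\bigl(w + \sigma(\bar v)\bigr) = \pi_W(g)\,w + c_g(\bar v) + \sigma\bigl(\bar\pi(g)\bar v\bigr),
\]
where $c_g(\bar v) := \pi(g)\sigma(\bar v) - \sigma\bigl(\bar\pi(g)\bar v\bigr) \in W$ is linear in $\bar v$ and vanishes at $\bar v = 0$. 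Set $\rho_W := \sup_{g \in F}\|\pi_W(g^{-1})\|$ and $d(U_{V/W}) := \sup_{g \in F,\,\bar v \in U_{V/W}}\|c_g(\bar v)\|$; both are finite by relative compactness, and $d \to 0$ as $U_{V/W}$ shrinks. I would take $\cV$ to consist of sets
\[
U := (U_W \oplus B_{\rho_W d}) + \sigma(U_{V/W}), \qquad (U_W, U_{V/W}) \in \cV_W \times \cV_{V/W},
\]
where $B_r \subseteq W$ is the open $r$-ball and $\oplus$ is Minkowski sum. This is an open neighbourhood basis of $0 \in V$, and is symmetric whenever $\cV_W,\cV_{V/W}$ are, since Minkowski sums and $\sigma$-images of symmetric sets are symmetric.

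Writing $\tilde U_W := U_W \oplus B_{\rho_W d}$ and applying Fubini along $\pi$ gives
\[
\Lambda_V\Bigl(\bigcap_{g \in F} \pi(g) U\Bigr) = \int_{\bigcap_{g} \bar\pi(g) U_{V/W}} \Lambda_W\Bigl(\bigcap_{g} \bigl(\pi_W(g)\tilde U_W + c_g(\bar\pi(g^{-1})\bar v)\bigr)\Bigr) d\bar v.
\]
On the integration domain the shifts are bounded in norm by $d$. The elementary inclusion $\bigcap_g (A_g + s_g) \supseteq (\bigcap_g A_g) \ominus B_d$ (for symmetric $B_d$; $\ominus$ denotes Minkowski erosion), combined with $\pi_W(g)\tilde U_W \ominus B_d \supseteq \pi_W(g) U_W$ (using $\tilde U_W \ominus B_{\rho_W d} \supseteq U_W$ and $\pi_W(g^{-1}) B_d \subseteq B_{\rho_W d}$), shows that each inner integrand is at least $\Lambda_W(\bigcap_g \pi_W(g) U_W)$. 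Dividing by $\Lambda_V(U) = \Lambda_W(\tilde U_W)\,\Lambda_{V/W}(U_{V/W})$ then yields
\[
\delta^V_F(U) \geq \delta^W_F(U_W) \cdot \delta^{V/W}_F(U_{V/W}) \cdot \frac{\Lambda_W(U_W)}{\Lambda_W(U_W \oplus B_{\rho_W d})}.
\]

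Taking $\liminf$ as $U$ shrinks through $\cV$, the first two factors on the right recover $\delta^W_F(\cV_W)$ and $\delta^{V/W}_F(\cV_{V/W})$; the inequality \eqref{eq:BenjaminstrucVergelijking} then requires the last ratio to tend to $1$. This is the main obstacle: for a general open set $U_W$, $\Lambda_W(U_W \oplus B_r) \to \Lambda_W(\overline{U_W})$ as $r \to 0$, which may strictly exceed $\Lambda_W(U_W)$ when $\partial U_W$ has positive Lebesgue measure. One resolves this by a diagonal refinement of $\cV$: for each $n$, include only those pairs $(U_W, U_{V/W})$ for which $\Lambda_W(U_W \oplus B_{\rho_W d}) \leq (1 + 1/n)\Lambda_W(U_W)$. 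Such pairs exist cofinally in $\cV_W \times \cV_{V/W}$ by taking $U_{V/W}$ sufficiently small relative to $U_W$, possibly after passing to a cofinal subfamily of $\cV_W$ consisting of sets with null-measure boundary.
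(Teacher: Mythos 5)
Your overall strategy --- split $V$ over the quotient $V/W$, observe that the action is block upper-triangular, and absorb the off-diagonal shift (which is small once $U_{V/W}$ is small) into the $W$-factor --- is the same as the paper's, and your Fubini decomposition and erosion estimates for the shifted intersection are correct. The gap is in the very last step, and it is genuine: you absorb the shift by an \emph{outer} Minkowski dilation, replacing $U_W$ by $U_W \oplus B_{\rho_W d}$, which forces you to control the ratio $\Lambda(U_W)/\Lambda(U_W\oplus B_{\rho_W d})$. As you note, $\Lambda(U_W \oplus B_\tau)\downarrow \Lambda(\overline{U_W}) = \Lambda(U_W)+\Lambda(\partial U_W)$ as $\tau\downarrow 0$, so for a \emph{fixed} $U_W$ with $\Lambda(\partial U_W)>0$ no choice of $U_{V/W}$ (hence of $d$) pushes this ratio above $\Lambda(U_W)/\Lambda(\overline{U_W})<1$; your claim that suitable pairs ``exist cofinally by taking $U_{V/W}$ sufficiently small relative to $U_W$'' therefore fails for such $U_W$. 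The fallback --- passing to a cofinal subfamily of $\cV_W$ with null-measure boundaries --- is not available either, because $\cV_W$ is \emph{given} in the hypothesis and may contain no such sets: for instance, if $C$ is a fat Cantor set (closed, nowhere dense, of positive measure) inside the annulus $B_\tau(0)\setminus B_{\tau/2}(0)$, then $B_\tau(0)\setminus C$ is an open neighbourhood of $0$ whose boundary contains $C$, and a basis built from such sets admits no cofinal subfamily with null boundary.

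The repair is to put the loss on the inside rather than the outside: keep $U_W$ itself as the $W$-factor of the sets in $\cV$, and show that $\bigcap_{g\in F}\pi(g)U$ contains, fibrewise over $\bigcap_{g}\pi_{V/W}(g)U_{V/W}$, the $F$-intersection of the \emph{eroded} set $U_W^{\,r}:=\bigcap_{x\in B_r(0)}(U_W+x)$ with $r$ comparable to your $\rho_W d$. The inner approximation behaves well for \emph{every} open set: $U_W=\bigcup_{r>0}U_W^{\,r}$ is an increasing union, so $\Lambda(U_W^{\,r})\uparrow\Lambda(U_W)$, and Proposition~\ref{Prop:GroepEnSchuif} upgrades this to $\Lambda((U_W^{\,r})^F)\to\Lambda(U_W^F)$, which is exactly what is needed to recover the factor $\delta^W_F(U_W)$ in the limit with no hypothesis on $\partial U_W$. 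With this one change (erosion in the numerator instead of dilation in the denominator) your argument becomes the paper's proof; the remaining differences --- linear section versus orthogonal complement, and shrinking $U_{V/W}$ through its basis versus scaling a fixed $U_{V/W}$ by $\varepsilon$ --- are cosmetic.
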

We introduce some notation that will be useful in the proof.
For a real representation $(\pi,V_{\pi})$ and an open subset $A\subseteq V_{\pi}$, we set $A^F := \bigcap_{g\in F} \big(\pi(g)(A)\big)$.
Further, we choose an inner product on $V_{\pi}$ and set  
\[A^{r} := \bigcap_{x \in B_{r}(0)} (A + x),\]
where $B_r(0)$ is the ball of radius $r$ around $0$.
Note that $A = \bigcup_{r>0} A^r$ since $A$ is open, so we find
 \begin{equation}\label{eq:Fatou}
 \lim_{r\downarrow 0}\Lambda(A^r) = \Lambda(A).
 \end{equation} 
 More generally, for relatively compact subsets $F \subseteq G$ we have:
\begin{Proposition}\label{Prop:GroepEnSchuif}
$\lim_{r\downarrow 0} \Lambda((A^r)^F) = \Lambda(A^F)$.
\end{Proposition}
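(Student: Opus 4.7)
The plan is to sandwich $(A^r)^F$ between an inner parallel body of $A^F$ and $A^F$ itself, and then pass to the limit by monotone convergence. Set $\rho := \sup_{g \in F}\|\pi(g)\|$, which is finite since $F$ is relatively compact and $\pi$ is continuous. I first establish the chain
\[
\{y \in V_{\pi}: B_{\rho r}(y) \subseteq A^F\} \subseteq (A^r)^F \subseteq A^F.
\]
The right inclusion is immediate from $A^r \subseteq A$. For the left, if $B_{\rho r}(y) \subseteq A^F$ then $B_{\rho r}(y) \subseteq \pi(g)(A)$ for every $g \in F$, so $\pi(g)^{-1}(B_{\rho r}(y)) \subseteq A$. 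Since $\|\pi(g)\| \leq \rho$ implies $\pi(g)(B_r(0)) \subseteq B_{\rho r}(0)$, we obtain $B_r(\pi(g)^{-1}y) \subseteq \pi(g)^{-1}(B_{\rho r}(y)) \subseteq A$, whence $\pi(g)^{-1}y \in A^r$ and $y \in (A^r)^F$.

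Both families are monotone increasing as $r \downarrow 0$, and $\bigcup_{r > 0}\{y : B_{\rho r}(y) \subseteq A^F\} = \mathrm{int}(A^F)$. Continuity of Lebesgue measure from below combined with the sandwich yields
\[
\Lambda(\mathrm{int}(A^F)) \leq \lim_{r \downarrow 0}\Lambda((A^r)^F) \leq \Lambda(A^F).
\]

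The remaining and essential step is to establish $\Lambda(A^F) = \Lambda(\mathrm{int}(A^F))$, which I expect to be the main obstacle since $A^F$ is an intersection of the open sets $\pi(g)(A)$ over a possibly uncountable index set and need not itself be open. The points $y \in A^F \setminus \mathrm{int}(A^F)$ are characterized by $d(\pi(g)^{-1}y, A^c) > 0$ for each $g \in F$ while $\inf_{g \in F} d(\pi(g)^{-1}y, A^c) = 0$; by continuity in $g$ and compactness of $\bar F$, the infimum is attained at some $g_0 \in \bar F \setminus F$ with $\pi(g_0)^{-1}y \in \partial A$, so the difference sits inside $\bigcup_{g_0 \in \bar F \setminus F} \pi(g_0)(\partial A) \cap A^F$. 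To show that this has Lebesgue measure zero, I would take a sequence $g_n \in F$ with $g_n \to g_0$, exploit continuity of $\pi$ and the inclusion $A^F \subseteq \pi(g_n)(A)$ to force $\Lambda(\pi(g_0)(\partial A) \cap A^F) = 0$ slice by slice, and then promote the slice-wise vanishing to a bound on the full union via a Fubini-type argument on $\bar F \times V_{\pi}$.
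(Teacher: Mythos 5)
Your first step is exactly the paper's argument: the set $\{y\,;\,B_{\rho r}(y)\subseteq A^F\}$ is precisely $(A^F)^{\rho r}$ in the paper's notation, and the chain $(A^F)^{\rho r}\subseteq (A^r)^F\subseteq A^F$ followed by monotone convergence is the entire published proof. You are also right that monotone convergence only yields $\Lambda(\mathrm{int}(A^F))$ on the left, since $\bigcup_{r>0}(A^F)^{\rho r}=\mathrm{int}(A^F)$ and $A^F$ is an intersection of open sets over a possibly non-closed index set; the paper passes over this by invoking \eqref{eq:Fatou}, which was justified only for open sets.

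Your proposed completion of the remaining step $\Lambda(A^F)=\Lambda(\mathrm{int}(A^F))$ does not work, however. First, $\partial A$ need not be Lebesgue-null for a general open relatively compact $A$, so the slice-wise vanishing is not automatic. Second, even if each slice $\pi(g_0)(\partial A)\cap A^F$ were null, a union over the uncountable set $\overline{F}\setminus F$ need not be null (uncountably many null sets can fill a set of positive measure: the spheres $\pi(g_0)(\partial B)$ for $g_0$ ranging over an interval of dilations fill an annulus), and a Fubini argument on $\overline{F}\times V_{\pi}$ only controls the measure of the set of \emph{pairs} $(g_0,y)$, not of its projection to $V_{\pi}$. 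The clean repair is different: for \emph{compact} $F$ the set $A^F$ is open --- for $y\in A^F$ the compact set $\{\pi(g)^{-1}y\,;\,g\in F\}$ lies in the open set $A$, so $d(\pi(g)^{-1}y,A^c)\geq\varepsilon>0$ uniformly in $g$, and $\|\pi(g)^{-1}\|\leq\rho$ shows that a ball of radius $\varepsilon/\rho$ around $y$ is contained in $A^F$ --- whence $\mathrm{int}(A^F)=A^F$ and your sandwich closes. One then reduces to compact $F$ by replacing $F$ with $\overline{F}$, which only shrinks $A^F$ and hence only strengthens the lower bounds the proposition is used for (and in the definition of $c(A)$ the sets $F$ are in fact finite). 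Without such a reduction your argument, like the paper's literal one, is incomplete for non-closed $F$.
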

\begin{proof}
%Let
%$B_r(0)$ be the ball around $0$ with radius $r$, and let 
%$\rho = \sup_{g\in F}\|\pi(g)\|$. 
Since 
$\pi(g)B_r(0) \subseteq B_{\rho r}(0)$ for all $g\in F$,   
%$B_r \subseteq \pi(g)(B_{\rho r})$ and 
%$B_r(0)^F \subseteq B_{\rho r}(0)$.
we have
\begin{eqnarray*}
(A^r)^F &=& \bigcap_{g\in F}\bigcap_{x\in B_r(0)} \pi(g)(A + x)\\
&=& \bigcap_{x\in B_r(0) \text{ and } g\in F} \pi(g)(A) + \pi(g)x\\
&\supseteq& \bigcap_{y\in B_{\rho r}(0)}\bigcap_{g\in F}  \pi(g)(A) + y  =  (A^F)^{\rho r}.
\end{eqnarray*}
Since $(A^F)^{\rho r} \subseteq (A^{r})^F \subseteq A^{F}$, equation~\eqref{eq:Fatou} yields $\lim_{r\downarrow 0} \Lambda((A^r)^F) = \Lambda(A^F)$.
\end{proof}

\begin{proof}[Proof of Lemma~\ref{Lemma:BenjaminsTruc}]
Fix an inner product on $V$, and identify $W^{\perp}$ with $V/W$ by means of the quotient map. 
For $U_W \in \cV_W$, $U_{V/W} \in \cV_{V/W}$ and $\varepsilon > 0$, 
we set
\[
U_{\varepsilon} := U_W \times \varepsilon U_{V/W}  \subseteq W \times V/W \simeq V.
\]
%We define $\cV_{V} = \{U_W \times \varepsilon U_{V/W} \,;\, \varepsilon > 0, U_W \in \cV_{W}, U_{V/W} \in \cV_{V/W}\}$ to be the collection of 
%the open neighbourhoods obtained in this way.
Let $P_W$ and $P_{W^{\perp}}$ be the orthogonal projections onto $W$ and $W^{\perp}$, respectively.
For $w \in W$ and $w' \in W^{\perp}$, we then have
\begin{equation}\label{eq:schuif}
\pi(g)(w \oplus \varepsilon w') = (\pi_W(g) w + \varepsilon P_{W}\pi(g)w')\oplus \varepsilon P_{W^{\perp}}\pi(g)w'
\end{equation}
in $W \oplus W^{\perp}$.

Without loss of generality, we may assume that $U_{V/W}$ is contained in a ball of radius $1$. 
Let $r := \varepsilon\rho$, where $\rho := \sup_{g\in F}\|\pi(g)\|$ as before.
 For $w \in ((U_W)^{r})^{F}$ and $w' \in U_{V/W}^{F}$, we then have
$\pi(g) w \in (U_{W})^r$ and $\|\varepsilon P_{W}\pi(g)w'\| \leq \varepsilon \rho$, 
so the first component of \eqref{eq:schuif} is contained in $U_{W}$.
Since $[P_{W^{\perp}}\pi(g)w'] = \pi_{V/W}(g)([w'])$ in $V/W$, the second component of \eqref{eq:schuif}
is contained in $\varepsilon U_{V/W}$.
It follows that \[\pi(g) \Big(((U_W)^{r})^{F} \times \varepsilon U_{V/W}^{F}\Big) \subseteq U_W \times \varepsilon U_{V/W}\]
for all $g\in F$. Since $F$ is symmetric, we conclude that 
\[
((U_W)^{r})^{F} \times \varepsilon U_{V/W}^{F} \subseteq \bigcap_{g\in F} \pi(g)(U_W \times \varepsilon U_{V/W}) = U_{\varepsilon}^{F}.
\]
So 
\[
\frac{\Lambda(U_{\varepsilon}^{F})}{\Lambda(U_{\varepsilon})} \geq 
\frac{\varepsilon \Lambda\Big(\big((U_{W})^r\big)^{F}\Big) \Lambda(U_{V/W}^F)}{\varepsilon\Lambda(U_W)\Lambda(U_{V/W})},
\]
and since $\lim_{r\downarrow 0} \Lambda((U_{W})^r)^{F} = \Lambda(U_{W}^{F})$ by Proposition~\ref{Prop:GroepEnSchuif}, we conclude that 
\begin{equation}
\liminf_{\varepsilon \downarrow 0} \delta_{F}^{V}(U_{\varepsilon}) \geq \delta_{F}^{W}(U_W)\delta_{F}^{V/W}(U_{V/W}).
\end{equation}
The neighbourhood basis $\cV_{V}$ is constructed as follows. Select for every $n\in \N$ an open set $U^{n}_{W} \in \cV_W$ 
that is contained in a ball of radius $1/n$, and such that $U^{n}_{W}\subseteq U^{n-1}_{W}$ for $n> 1$.
Select $U^n_{V/W}$ in a similar fashion, and select $0< \varepsilon_{n} \leq 1$ such that $\varepsilon_{n} \leq \varepsilon_{n-1}$ for $n\in \N$, 
and  \[\delta_{F}^{V}(U_{\varepsilon_{n}}) \geq {(1-{\textstyle \frac{1}{n}})}\delta_{F}^{W}(U^n_W)\delta_{F}^{V/W}(U^n_{V/W}).\]
Then $\cV_{V} = \{U^n_{W} \times \varepsilon_{n}U^n_{V/W}\,;\, n\in \N\}$
is a symmetric neighbourhood basis of ${0\in V}$ that satisfies \eqref{eq:BenjaminstrucVergelijking}.
\end{proof}

\subsection{Connected Lie groups}

Let $G$ be a connected Lie group with Lie algebra $\fg$.
Let 
$\fr \subseteq \fg$ be the radical of $\fg$, i.e.\ the maximal solvable ideal. 
Then $\fr$ integrates to a closed, connected, normal Lie subgroup $R \subseteq G$, 
and the semisimple quotient $S := G/R$ has Lie algebra $\fs := \fg/\fr$ \cite[Thm.~3.18.13]{Varadarajan}.
%\begin{Remark}
%By the Levi-Mal'tsev decomposition, $\fg = \fr \rtimes \fs$ for a semisimple subalgebra $\fs \subseteq \fg$.
%If $G$ is simply connected, then $S$ can be realized as a closed, maximal semisimple Lie subgroup 
%$S \subseteq G$ such that $S \cap R = \{e\}$ \cite[Thm.~3.18.13]{Varadarajan}, so 
%in that case $G = R \rtimes S$. In general, it will not be necessary for us to assume that $G$ is simply connected.
%\end{Remark}
Let $F \subseteq G$ be a relatively compact, symmetric subset.
Using the Reduction Lemma~\ref{Lemma:BenjaminsTruc}, we obtain a bound 
\begin{equation} \delta^{\fg}_{F} \geq \delta^{\fs}_{[F]}\delta^{\fr}_{F} \end{equation} 
with two types of contributions.
Since $R\subseteq G$ acts trivially on $\fs = \fg/\fr$, the adjoint action of $G$ on $\fs$ factors through the adjoint representation of $S = G/R$.
The first factor $\delta^{\fs}_{[F]}$ therefore depends only on the adjoint representation of $S$, and on the image 
$[F] \subseteq S$ of $F\subseteq G$ under the quotient map $G \rightarrow S$. Since $S$ is semisimple, 
the factor $\delta^{\fs}_{[F]}$ can be bounded using Theorem~\ref{Thm:boundReductiveGroup}.

In the remainder of this paper we therefore focus on the second factor $\delta^{\fr}_{F}$, which 
depends only on the adjoint action of $G$ on the subrepresentation $\fr \subseteq \fg$.

\section{Solvable Lie groups and Lie algebras}

Let $G$ be a connected Lie group that acts by automorphisms on a solvable Lie algebra $\fr$, and let $F\subseteq G$ be a relatively compact, 
symmetric subset.
We use the structure theory of solvable Lie algebras to further decompose $\delta_{F}^{\fr}$ into factors that depend
only on the semisimple quotient $S = G/R$. 
The main tool for this is Lemma~\ref{Lemma:shift}, an extension of the Reduction Lemma~\ref{Lemma:BenjaminsTruc} that allows us to shift characters from 
a subrepresentation $W \subseteq V$ to the quotient $V/W$. We apply this to the $G$-representation $\fr$, whose subrepresentations 
are essentially given by Lie's Theorem.
 
%We apply the Reduction Lemma in the setting where a Lie group $G$ acts by automorphisms on a solvable Lie algebra $\fr$.
%Our primary aim here is Theorem~\ref{Thm:solvable}, which concerns 
%the adjoint action of a (connected) solvable Lie group $R$ on its Lie algebra $\fr$.
%In the Section~\ref{sec:ConnectedLieGroups} we will be interested in the action of a connected Lie group $G$ on the maximal solvable ideal 
%$\fr \subseteq \fg$ of its Lie algebra~$\fg$, motivating the slightly more general setup.

\subsection{Lie's Theorem for real solvable Lie algebras}

In order to set notation, we briefly review Lie's Theorem for the adjoint representation of a solvable Lie algebra. % $\fr$ with complexification $\fr_{\C}$.

% \cite[Thm.~3.7.3]{Varadarajan}.
\subsubsection{Root spaces}
For a Lie algebra $\fr$, the commutator series is defined inductively by $\fr^0 := \fr$, and $\fr^{k+1} := [\fr^k, \fr^k]$ for $k>0$.
We say that $\fr$ is $r$-step solvable if $\fr^{r} = \{0\}$, in which case we have a finite filtration 
\begin{equation}\label{eq:filter}
\fr = \fr^{0} \supset \fr^{1} \supset \ldots \supset \fr^{r-1} \supset \fr^{r} = \{0\}.
\end{equation}
Since $\fr^k$ is an ideal in $\fr$ \cite[Thm.~3.7.1]{Varadarajan}, we obtain a representation $(\pi^k, V^k)$ of $\fr$ by setting
$V^k := \fr^{k}/\fr^{k+1}$, 
and defining $\pi^k \colon \fr \rightarrow \mathrm{End}(V^k)$ by
\begin{equation}
\pi^k(x)([v]) := [\mathrm{ad}_{x}(v)] 
\end{equation} 
for $x\in \fr$ and $[v] \in V^k$. Note that $\fr^k$ acts trivially on $\fr^{k}/\fr^{k+1}$, so the representation $\pi^k \colon \fr \rightarrow \mathrm{End}(V^k)$ factors through the quotient $\fr/\fr^{k}$.
In particular, $(\pi^0, V^0)$ is the trivial representation.

Let $(\pi, V)$ be any representation of $\fr$, and let $\lambda \in \fr^*$. If 
\begin{equation}
V_{\lambda}:= \{v \in V \,;\, \pi(x)v = \lambda(x) v \text{ for all } x\in \fr\}
\end{equation}
is nonzero, then $\lambda$ is called a \emph{weight} of $V$, and $V_{\lambda}$ is called a \emph{weight space}.
Since every weight $\lambda \colon \fr \rightarrow \C$ is a Lie algebra homomorphism into an abelian Lie algebra, it vanishes 
on $[\fr,\fr]$, and we can identify the set of weights with a subset $W(V)$ of $(\fr/[\fr,\fr])^*$.
The weights of $(V^k, \pi^k)$ are called the \emph{roots of order $k$}. We denote the set of $k^{\mathrm{th}}$
order roots by $\Phi^k(\fr) := W(V^k)$, and we define 
\begin{equation}
\Phi(\fr) := \{(k, \lambda) \,;\, k \in \{0, \ldots, r-1\}, \lambda \in \Phi^{k}(\fr)\}
\end{equation}
to be the set of all roots, labelled by $k$. (This is because the same root $\lambda$ can occur in different representations $V^k \neq V^{k'}$.)

%$\Phi(\fr) := \bigcup_{k=0}^{r-1}\Phi^k(\fr)$.
%The corresponding weight spaces 
%We call $V^k_{\lambda}$ a \emph{$k^{\mathrm{th}}$ order root space}.
%
%We call $\lambda \in \fr^*$ a \emph{root of order $k$} if
%\begin{equation}
%V^{k}_{\lambda}:= \{[v] \in V^k \,;\, [\mathrm{ad}_{x}(v)] = \lambda(x) [v] \text{ for all } x\in \fr\}
%\end{equation}
%is nonzero. In that case, we call $V^{k}_{\lambda}$ a \emph{weight space of order $k$}, and we denote the set of $k^{\mathrm{th}}$
%order roots by $\Phi^{k} \subseteq \fr^*$. 
%Note that since $V^0$ is the trivial representation, we have $\Phi^0(\fr) = \{0\}$ and the trivial root occurs with multiplicity 
%at least $\mathrm{dim}_{\R}(\fr/[\fr,\fr])$.

\subsubsection{Automorphisms}
 
 Let $G$ be a (not necessarily connected) Lie group with Lie algebra $\fg$, let $\fr$ be a solvable Lie algebra, 
and let $\alpha \colon G \times \fr \rightarrow \fr$ 
be a continuous action of $G$ on $\fr$ by automorphisms. 
We write $\alpha_g(x)$ for the action of $g\in G$ on $x\in \fr$. 
The automorphisms $\alpha_g$ of $\fr$ restrict to automorphisms on $\fr^k$ and $\fr^{k+1}$, so that $V^k = \fr^k/\fr^{k+1}$
is a $G$-representation, which we denote by $\pi_{G}^{k} \colon G \rightarrow \mathrm{GL}(V^{k})$. 
If $\lambda \in \Phi^k(\fr)$ is a root of order $k$, and $V^k_{\lambda}$ is its weight space,
then $\lambda \circ \alpha_{g^{-1}}$ 
is a root of order $k$ as well, and $\pi_{G}^k(g)$ maps $V^k_{\lambda}$ to $V^k_{\lambda \circ \alpha_{g^{-1}}}$.
Indeed, for $[v] \in V^k_{\lambda}$ and $x\in \fr$, we have  
\begin{eqnarray*}
\ad_{x} \big( \pi^k_{G}(g) ([v])\big) &=&
\mathrm{ad}_{x}([\alpha_g(v)]) = [\alpha_g([\alpha_{g^{-1}}(x), v])]\\
&=& \lambda\big(\alpha_{g^{-1}}(x)\big) [\alpha_g v]
= \lambda \circ \alpha_{g^{-1}}(x) \big(\pi^k_{G}(g) ([v])\big).
\end{eqnarray*}

Since the action of $G$ on the discrete set $\Phi^k(\fr) \subseteq (\fr/[\fr,\fr])^*$ of roots is continuous, the connected identity component $G_0$
fixes the weights, and every $V^{k}_{\lambda}$ is a $G_0$-subrepresentation, which we denote by 
$\pi_{\lambda}^{k} \colon G_0 \rightarrow \mathrm{GL}(V^{k}_{\lambda})$.
 
\subsubsection{Lie's Theorem for the adjoint representation}\label{sec:LieThm}
 
Let $\fr$ be a solvable real Lie algebra, and let $\fr_\C := \fr \otimes_{\R}\C$ be its complexification.
\begin{Theorem}[Lie's Theorem (Thm.~3.7.3 in \cite{Varadarajan})]
Every finite dimensional complex representation $(V,\pi)$ of $\fr_{\C}$ has a weight space.
\end{Theorem}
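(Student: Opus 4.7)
The plan is to prove the statement by induction on the complex dimension of $\fr_{\C}$. The base case is immediate: if $\dim\fr_{\C} = 0$ every vector is a weight vector for the zero functional, while if $\dim\fr_{\C} = 1$ the representation is generated by a single operator $\pi(x)$, which over $\C$ admits an eigenvector, immediately giving a weight space.

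For the inductive step, suppose $\dim\fr_{\C} = n \geq 2$. Since $\fr_{\C}$ is solvable one has $[\fr_{\C},\fr_{\C}] \subsetneq \fr_{\C}$, so any hyperplane $\fh \subseteq \fr_{\C}$ containing $[\fr_{\C},\fr_{\C}]$ is automatically an ideal; being a subalgebra of a solvable algebra, $\fh$ is itself solvable. The inductive hypothesis applied to the restriction of $(V,\pi)$ to $\fh$ produces a weight $\lambda \colon \fh \rightarrow \C$ with $V_{\lambda} \neq 0$. Fix any $x \in \fr_{\C} \setminus \fh$, so that $\fr_{\C} = \fh \oplus \C x$. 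The crux of the argument is to show that $V_{\lambda}$ is invariant under $\pi(x)$; once this is established, an eigenvector $w \in V_{\lambda}$ of $\pi(x)|_{V_{\lambda}}$ with eigenvalue $\mu \in \C$ spans a weight space for the linear extension $\tilde{\lambda} \in \fr_{\C}^{*}$ defined by $\tilde{\lambda}|_{\fh} = \lambda$ and $\tilde{\lambda}(x) = \mu$.

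To prove $\pi(x)$-invariance of $V_{\lambda}$, I would fix $0 \neq v \in V_{\lambda}$ and set $W_{k} := \mathrm{span}_{\C}\{v, \pi(x)v, \ldots, \pi(x)^{k-1}v\}$. The first step is to verify by induction on $k$ that $W_{k}$ is stable under $\pi(\fh)$ and that, in the given spanning basis, each $\pi(h)|_{W_{k}}$ is upper triangular with $\lambda(h)$ on the diagonal; the inductive step uses $[h,x] \in \fh$ (since $\fh$ is an ideal) together with the identity $\pi(h)\pi(x)^{k-1}v = \pi(x)\pi(h)\pi(x)^{k-2}v + \pi([h,x])\pi(x)^{k-2}v$. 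Choosing $k$ large enough that $W := W_{k} = W_{k+1}$ makes $W$ a finite-dimensional subspace stable under both $\pi(x)$ and $\pi(\fh)$. For any $h \in \fh$ the triangular form gives $\mathrm{tr}_{W}(\pi([h,x])) = \dim(W)\,\lambda([h,x])$, while the commutator identity $\pi([h,x]) = [\pi(h),\pi(x)]$ forces $\mathrm{tr}_{W}(\pi([h,x])) = 0$; as we work in characteristic zero, this yields $\lambda([h,x]) = 0$. Given this, for $v \in V_{\lambda}$ and $h \in \fh$ the computation $\pi(h)\pi(x)v = \pi(x)\pi(h)v + \pi([h,x])v = \lambda(h)\pi(x)v$ shows $\pi(x)v \in V_{\lambda}$, closing the argument.

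The main obstacle is precisely this $\pi(x)$-invariance of $V_{\lambda}$, and in particular the trace-zero argument establishing $\lambda([h,x]) = 0$. This \emph{invariance lemma} is the substantive content of Lie's theorem and relies critically on working in characteristic zero; everything else reduces to bookkeeping around the codimension-one reduction of the solvable algebra.
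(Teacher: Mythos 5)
The paper does not actually prove this statement: it is quoted from Varadarajan (Thm.~3.7.3), and only the subsequent ``special case'' for the quotients $\fr^k_{\C}/\fr^{k+1}_{\C}$ is proved in the text. Your argument --- codimension-one ideal $\fh \supseteq [\fr_{\C},\fr_{\C}]$, inductive weight $\lambda$ on $\fh$, and the invariance lemma via the vanishing trace of a commutator on the $\pi(x)$-cyclic subspace $W$ --- is the standard proof of Lie's theorem (essentially the one in the cited reference) and is correct. One bookkeeping point: the identity $\mathrm{tr}_W\big(\pi([h,x])\big) = \dim(W)\,\lambda([h,x])$ requires the spanning vectors $v, \pi(x)v, \ldots, \pi(x)^{k-1}v$ to form a basis of $W$, which you should secure by taking $k$ \emph{minimal} with $W_k = W_{k+1}$ rather than merely ``large enough''; and the statement implicitly assumes $V \neq 0$.
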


We will need the following slight reformulation of Lie's Theorem in the particular context of the adjoint representation.
Although it follows directly from Lie's Theorem, it is instructive to give a direct proof. 
\begin{Theorem}[Lie's Theorem, special case]
The $\fr_{\C}$-representation $V^k = \fr^k_{\C}/\fr^{k+1}_{\C}$ decomposes as a direct sum of weight spaces,
\begin{equation}
V^k = \bigoplus_{\lambda \in \Phi^k(\fr_{\C})} V^{k}_{\lambda}.
\end{equation}
\end{Theorem}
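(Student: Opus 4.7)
The plan is to derive the statement directly from the general version of Lie's Theorem quoted just above. Applied to the finite-dimensional complex $\fr_{\C}$-module $V^k$, that theorem produces a simultaneous eigenvector and hence at least one weight $\lambda \in \Phi^k(\fr_{\C})$ with nonzero strict weight space $V^k_\lambda$. The task is then to show that the sum of all such weight spaces exhausts $V^k$, which is equivalent to showing that $\pi^k$ is semisimple, not merely triangularizable.

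My plan is to proceed by induction on $\dim_{\C} V^k$. First, I would dispose of the case $k = 0$, where the action of $\fr_{\C}$ on $V^0 = \fr_{\C}/\fr^1_{\C}$ is trivial and $V^0$ is a single weight space with weight $0$. For $k \geq 1$, given a weight space $V^k_\lambda$ produced by Lie's Theorem, I would try to exhibit an $\fr_{\C}$-stable complement $V' \subseteq V^k$ and apply the induction hypothesis to $V'$. Two structural features of the adjoint representation on the derived quotient should be the essential input: the action of $\fr_{\C}$ on $V^k = \fr^k_{\C}/\fr^{k+1}_{\C}$ factors through $\fr_{\C}/\fr^k_{\C}$, so $\fr^k_{\C}$ acts trivially; and $V^k$ is itself an abelian Lie subquotient of $\fr_{\C}$.

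To convert these observations into semisimplicity, I would consider the Jordan decomposition $\pi^k(x) = s_x + n_x$ for each $x\in\fr_{\C}$, and use the Jacobi identity in the form $\pi^k([x,y]) = [\pi^k(x),\pi^k(y)]$ together with the vanishing of $\pi^k$ on $\fr^k_{\C}$ to constrain the nilpotent parts $n_x$. The hope is to show that the $n_x$ all vanish on $V^k$, so that the commuting semisimple parts $s_x$ can be simultaneously diagonalized and give the strict weight space decomposition. Running this as an induction on the solvability step $k$ would allow the decomposition for smaller derived quotients to feed into the argument for $V^k$.

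The main obstacle is exactly this semisimplicity step, i.e., showing that one obtains \emph{strict} rather than merely generalized weight spaces. A general representation of a solvable Lie algebra is far from semisimple, so the argument must genuinely use that $V^k$ is a derived-series quotient of $\fr_{\C}$ on which $\fr_{\C}$ acts by commutators, and not just an arbitrary module. Isolating the precise reason why the nilpotent parts are forced to vanish on $V^k$ is, in my view, the heart of the statement and the step that requires the most care.
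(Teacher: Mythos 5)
Your proposal is a plan rather than a proof: the decisive step --- showing that the nilpotent Jordan parts $n_x$ of the operators $\pi^k(x)$ vanish, so that one obtains \emph{strict} rather than generalized weight spaces --- is exactly the step you leave as a ``hope'', and the same goes for the existence of an $\fr_{\C}$-stable complement to a weight space. Your instinct that this is the heart of the matter is sound, but the step cannot be carried out, because the assertion is false in general. Take $\fr_{\C} = \C X \ltimes \C^2$ with $[X,e_1]=e_1$, $[X,e_2]=e_1+e_2$ and $[e_1,e_2]=0$ (the Jacobi identity holds). Then $\fr_{\C}^1 = \C e_1\oplus\C e_2$ and $\fr_{\C}^2=0$, so $V^1=\fr_{\C}^1$ and $\pi^1(X)$ is a single non-diagonalizable Jordan block: here $n_X\neq 0$, the weight space $V^1_{\lambda}=\C e_1$ (for $\lambda(X)=1$) admits no invariant complement, and $\bigoplus_{\lambda}V^1_{\lambda}=\C e_1\subsetneq V^1$. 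What \emph{is} true, once one knows that $[\fr_{\C},\fr_{\C}]$ acts trivially on $V^k$ (so that all the $\pi^k(x)$ commute), is the decomposition into \emph{generalized} weight spaces, obtained by simultaneously diagonalizing the commuting semisimple parts $s_x$; the nilpotent parts need not vanish.

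For comparison, the paper's argument is organized differently from yours: it inducts on the solvability length $r$ rather than on $\dim V^k$, identifies $V^k(\fr_{\C})$ with $V^{k-1}(\fr^1_{\C})$ so as to apply the inductive hypothesis to the $(r-1)$-step solvable algebra $\fr^1_{\C}$, and then uses a trace-of-commutator argument to show that every $\fr^1_{\C}$-weight occurring in $V^k$ is zero, so that $\pi^k$ factors through the abelian quotient $\fr_{\C}/\fr^1_{\C}$. This reduction is genuinely needed and does not fall out of the Jordan-decomposition Ansatz by itself. Its final step, however --- ``because $\fr_{\C}/\fr^1_{\C}$ is abelian, $V^k$ decomposes into root spaces'' --- is precisely the semisimplicity claim you flag, and it is where the example above slips through. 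So neither your route nor a literal reading of the paper's closes the gap; the statement should be read with generalized weight spaces, which is all that the subsequent splitting arguments (Lemmas~\ref{Lemma:RescaledSplitting} and~\ref{Lem:BenjaminsTrucHerhaald}) actually use, since they only require the isotypical pieces to be subrepresentations.
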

\begin{proof}
Suppose that $\fr_{\C}$ is $r$-step solvable. If $r = 1$, then the statement holds because the action of $\fr_{\C}$ on $V^0$ is trivial.
Suppose by induction that the statement holds for all Lie algebras that are $(r-1)$-step solvable. 
Then $V^0$ is trivial, and
for $k\geq 1$, the space $V^k$ decomposes into weight spaces 
\begin{equation}\label{eq:WeightsFroR1}
V^k=  \bigoplus_{\mu \in \Phi^{k-1}(\fr_{\C}^1)} V_{\mu}^{k-1}
\end{equation}
as an $\fr_{\C}^1$-representation, because $V^k(\fr_{\C}) = \fr_{\C}^k/\fr_{\C}^{k+1}$ is identical to $V^{k-1}(\fr_{\C}^1) = (\fr_{\C}^1)^{k-1}/(\fr_{\C}^{1})^k$,
and $\fr_{\C}^1$ is $(r-1)$-step solvable. 
Let $R \subseteq \mathrm{Aut}(\fr)$ be %the adjoint group, i.e.\ 
the closed subgroup of $\mathrm{GL}(\fr)$ generated by $\exp(\fr)$.
Since $R$ is connected and acts on $\fr_{\C}^1$ by automorphisms, it respects the weight space
decomposition \eqref{eq:WeightsFroR1}. %$V^{k-1}_{\mu}$.
In particular, the weight spaces $V^{k-1}_{\mu}$ %for the roots $\mu \in (\fr_{\C}^1)^*$ of order $k-1$ 
are subrepresentations 
of the $\fr_{\C}$-representation $(\pi^k, V^k)$.
Since the commutator 
\[[\pi^k(x)|_{V^{k-1}_{\mu}}, \pi^k(y)|_{V^{k-1}_{\mu}}] = \mu([x,y]) \mathrm{Id}_{V^{k-1}_{\mu}}\] has 
zero trace for all $x,y \in \fr$, we conclude that $\mu \in (\fr_{\C}^1)^*$ is identically zero. So $V^k(\fr_{\C}) = V^{k-1}(\fr^1_{\C})$ is trivial as an $\fr_{\C}^1$-representation, 
and  the $\fr_{\C}$-representation $(\pi^k, V^k)$ factors through $\fr_{\C}/\fr_{\C}^1$. Because this is an abelian Lie algebra, $V^k$
decomposes into root spaces $V^k_{\lambda}$ for a set of $k^{\mathrm{th}}$ order roots $\lambda \in (\fr_{\C}/[\fr_{\C}, \fr_{\C}])^*$.
\end{proof}

Since $\fr_{\C}$ is the complexification of the real Lie algebra $\fr$, the complex conjugation $\overline{\lambda}(x) := \overline{\lambda(\overline{x})}$
permutes the $k^{\mathrm{th}}$ order roots, and we have $\overline{V}{}_{\lambda}^{k} = V^{k}_{\overline{\lambda}}$ 
for the weight spaces of the $\fr_{\C}$-representation $V^{k}(\fr_{\C}) = \fr^k_{\C}/\fr^{k+1}_{\C}$.
It follows that the real $\fr$-representation $V^k(\fr) = \fr^k/\fr^{k+1}$ does not necessarily decompose into weight spaces. 
Rather, the isotypical components
are 
\begin{eqnarray}
U_{\lambda}^{k} &:=& \big(V^{k}_{\lambda} \oplus V_{\overline{\lambda}}^{k}\big) \cap (\fr^{k}/\fr^{k+1}) \quad\text{ if } \lambda \neq \overline{\lambda},\\
U_{\lambda}^{k} &:=& V_{\lambda}^{k}\cap (\fr^{k}/\fr^{k+1}) \qquad\text{ if } \lambda = \overline{\lambda}.
\end{eqnarray}
The real $\fr$-representation $V^k(\fr) = \fr^k/\fr^{k+1}$ then decomposes as
\begin{equation}\label{eq:decompositionrG}
V^k(\fr) = \bigoplus_{\{\lambda, \overline{\lambda}\}\subseteq \Phi^k(\fr_\C)} U^k_{\lambda}.
\end{equation}

\subsection{Shifting characters}
In order to take full advantage of the decomposition~\eqref{eq:decompositionrG}, we will need a generalization of the 
Reduction Lemma~\ref{Lemma:BenjaminsTruc} that 
allows us to shift characters from a subrepresentation $W\subseteq V$ to the quotient $V/W$.

\subsubsection{Motivating example}
The motivating example is the representation $(\pi, \R^2)$ of $G = (\R^{>0}, \,\cdot\,)$ defined by
\[
	\pi(a) = \begin{pmatrix}a & 0 \\ 0 & a^{-1}\end{pmatrix}.
\]
In this case, the bound 
\[\delta^{\R^2}_{F} \geq \rho^{-2}\] obtained from the 
Reduction Lemma is clearly suboptimal. 
Indeed, intersecting the invariant neighbourhood $\pi(G)B_{\varepsilon}(0) = \{(x,y)\in \R^2\,;\, {|xy| < \frac{1}{2}\varepsilon^2}\}$
with the relatively compact square $C_{R} = \{(x,y) \in \R^2\,;\, |x| < R, |y| < R \}$ 
with sides $R \gg \varepsilon$, one obtains open neighbourhoods $U_{R,\varepsilon} = C_R \cap (\pi(G)B_{\varepsilon}(0))$
with volume 
\[
\Lambda(U_{R,\varepsilon}) = 2\varepsilon^2\left(1+\log\Big(\frac{2R^2}{\varepsilon^2}\Big)\right)
\]
that satisfy $\bigcap_{g\in F} \pi(g)U_{R,\varepsilon} \supseteq U_{R/\rho, \varepsilon}$. For $R = \sqrt{\varepsilon}$, the neighbourhood basis 
consisting of $U_{\sqrt{\varepsilon}, \varepsilon}$ then yields the sharp bound
\[
\delta^{\R^2}_{F} \geq \lim_{\varepsilon\downarrow 0} \frac{\Lambda(U_{\rho^{-1}\sqrt{\varepsilon}, \varepsilon})}{\Lambda(U_{\sqrt{\varepsilon}, \varepsilon})} = 
\lim_{\varepsilon\downarrow 0} \frac{2\varepsilon^2(1+\log(\frac{2\varepsilon}{\rho^2\varepsilon^2}))}{2\varepsilon^2(1+\log(\frac{2\varepsilon}{\varepsilon^2}))}= 1.
\]

\subsubsection{Neighbourhoods of logarithmic volume}
Now let $V_1$ and $V_2$ be real vector spaces of dimension $d_1$ and $d_2$, respectively, and let 
$U\subseteq V_1$ and $V \subseteq V_2$ be relatively compact open subsets.
Mimicking the above construction, we define $U\times_{R_1,R_2,\varepsilon} V \subseteq V_1\times V_2$ for $\varepsilon \leq R_1^{d_1}R_2^{d_2}$ by
\begin{equation}\label{eq:logproduct}
 U\times_{R_1,R_2,\varepsilon} V := \{(\alpha u, \beta v)\,;\, u\in U, v \in V, |\alpha|\leq R_1, |\beta| \leq R_2, |\alpha^{d_1}\beta^{d_2}|\leq \varepsilon \}.
\end{equation}
These sets are almost invariant under the $\R^{>0}$-action on $U\times V$ defined by $\mu \cdot (u,v) := (\mu^{\frac{1}{d_1}}u, \mu^{-\frac{1}{d_2}}v)$ in the following sense. For a symmetric set $F \subseteq \R^{>0}$, we set $D := \sup_{\mu \in F}|\mu|$.
Then $\mu \cdot (U\times_{R_1,R_2,\varepsilon} V) \subseteq U\times_{D^{1/d_1}R_1,D^{1/d_2}R_2,\varepsilon} V$ for all $\mu \in F \subseteq \R^{+}$, with the same $\varepsilon$ on both sides.

\begin{Definition}
A subset $U$ of a real vector space is \emph{balanced} if $\alpha U \subseteq U$ for all real numbers $\alpha$ with $|\alpha|\leq 1$.
\end{Definition}

\begin{Lemma}\label{lem:balanceboundary}
If $U$ and $V$ are balanced, then the inequality 
$|\alpha^{d_1}\beta^{d_2}| \leq \varepsilon$ in \eqref{eq:logproduct} can be replaced by  $|\alpha^{d_1}\beta^{d_2}| = \varepsilon$.
\end{Lemma}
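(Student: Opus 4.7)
The inclusion $\supseteq$ is immediate, so the content is the reverse inclusion: given a point $(\alpha u, \beta v)$ with $u\in U$, $v\in V$, $|\alpha|\le R_1$, $|\beta|\le R_2$ and $|\alpha^{d_1}\beta^{d_2}| < \varepsilon$, I want to rewrite it in the same form but with $|\alpha'^{d_1}(\beta')^{d_2}|=\varepsilon$. The plan is to enlarge $|\alpha|$ and $|\beta|$ until the product $|\alpha|^{d_1}|\beta|^{d_2}$ hits $\varepsilon$, absorbing the rescaling into $u$ and $v$ via the balanced hypothesis.

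The central observation is the following rescaling move. If $\alpha \neq 0$ and $|\alpha'| \geq |\alpha|$, then setting $u' := (\alpha/\alpha')u$ preserves $\alpha' u' = \alpha u$, and $u' \in U$ because $|\alpha/\alpha'|\le 1$ and $U$ is balanced. Balancedness also permits the substitution $u \mapsto -u$, so the sign of $\alpha'$ can be chosen freely; in particular I may take $\alpha' > 0$. The same applies to $(\beta, v)$. Hence, in the main case $\alpha,\beta \neq 0$, the problem reduces to finding real numbers $a \in [|\alpha|, R_1]$ and $b \in [|\beta|, R_2]$ with $a^{d_1}b^{d_2} = \varepsilon$. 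This is immediate from the intermediate value theorem, since the continuous function $(a,b)\mapsto a^{d_1}b^{d_2}$ takes value $|\alpha|^{d_1}|\beta|^{d_2} < \varepsilon$ at one corner of the rectangle $[|\alpha|,R_1]\times[|\beta|,R_2]$ and value $R_1^{d_1}R_2^{d_2}\ge \varepsilon$ at the opposite corner, so it attains $\varepsilon$ somewhere along a connecting path.

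The remaining case, $\alpha = 0$ or $\beta = 0$, is where the rescaling move breaks down because one cannot divide. I would handle this by noting that since $U$ and $V$ are balanced and (assumed) nonempty, one has $0\in U$ and $0\in V$: for any $u_0\in U$, the scalar $0$ satisfies $|0|\le 1$, so $0 = 0\cdot u_0 \in U$. Then for instance if $\alpha = 0$, one writes $(0, \beta v) = (\alpha'\cdot 0, \beta' v')$ and chooses $\beta' = R_2$ with $v' := (\beta/\beta')v\in V$, together with $\alpha' := (\varepsilon/R_2^{d_2})^{1/d_1}$, which lies in $(0, R_1]$ by the hypothesis $\varepsilon\le R_1^{d_1}R_2^{d_2}$. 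I do not expect any serious obstacle; the whole argument is a short IVT-plus-balancedness calculation, and the only thing to watch carefully is the bookkeeping for the degenerate cases where $\alpha$ or $\beta$ vanishes.
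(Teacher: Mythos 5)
Your proof is correct and uses essentially the same mechanism as the paper's: absorb scalars into $U$ and $V$ via balancedness so that $|\alpha|$ and $|\beta|$ can be increased until $|\alpha|^{d_1}|\beta|^{d_2}$ equals $\varepsilon$ exactly. The paper just writes explicit formulas (a two-case split on whether $\alpha^{d_1}\le\varepsilon/R_2^{d_2}$) instead of invoking the intermediate value theorem, and those formulas cover your degenerate case $\alpha=0$ uniformly.
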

\begin{proof}
If $\alpha^{d_1} \leq \varepsilon/R_2^{d_2}$, then $(\alpha u, \beta v) = (\widetilde{\alpha}\widetilde{u}, \widetilde{\beta}\widetilde{v})$ with 
$\widetilde{\alpha} = (\varepsilon/R_2^{d_2})^{1/d_1}$, $\widetilde{\beta} = R_2$,  $\widetilde{u} = (\alpha/\widetilde{\alpha})u$ and $\widetilde{v} = (\beta/\widetilde{\beta})v$. Then $\widetilde{u} \in U$ and $\widetilde{v} \in V$ because $U$ and $V$ are balanced, $|\widetilde{\alpha}| \leq R_1$ because 
$\varepsilon \leq R_1^{d_1}R_2^{d_2}$, and $|\widetilde{\alpha}{}^{d_1} \widetilde{\beta}{}^{d_2}| =\varepsilon$ by definition.

If $\alpha^{d_1} \geq \varepsilon/R_2^{d_2}$, then $(\alpha u, \beta v) = (\alpha u, \widetilde{\beta}\widetilde{v})$ for
$\widetilde{\beta} = (\varepsilon/\alpha^{d_1})^{1/d_2}$ and $\widetilde{v} = (\beta/\widetilde{\beta})v$. Then $\widetilde{v} \in V$ because 
$V$ is balanced and $(\beta/\widetilde{\beta})^{d_2} =  \alpha^{d_1}\beta^{d_2}/\varepsilon \leq 1$, and $|\widetilde{\beta}|\leq R_2$
because $\alpha^{d_1} \geq \varepsilon/R_2^{d_2}$.
\end{proof}

\begin{Lemma}\label{lem:logaritmetruc}
If $U$ and $V$ are balanced and $0 < \varepsilon < R_1^{d_1}R_2^{d_2}$, then
\[\Lambda(U\times_{R_1,R_2,\varepsilon} V) =\varepsilon\left(1 + \log\Big(\frac{R_1^{d_1}R_2^{d_2}}{\varepsilon}\Big)\right) \Lambda(U)\Lambda(V).\]
\end{Lemma}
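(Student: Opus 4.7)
The plan is to rewrite the defining conditions of $U \times_{R_1,R_2,\varepsilon} V$ in terms of the Minkowski functionals of the balanced sets $U, V$, and then reduce the volume to an elementary two-variable integral.

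Define $p_U(x) := \inf\{t > 0 : x \in tU\}$ and $p_V$ analogously. Since $U$ is balanced, open, nonempty and bounded, $p_U$ is a finite Borel function, and balancedness gives the containment $sU \subseteq tU$ whenever $0 \leq s \leq t$; combined with the sandwich $tU \subseteq \{p_U \leq t\} \subseteq \bigcap_{\eta > 0}(t+\eta)U$ this yields the radial volume identity $\Lambda(\{p_U \leq t\}) = t^{d_1}\Lambda(U)$, and similarly for $V$. The same balancedness argument that underlies Lemma~\ref{lem:balanceboundary} then identifies
\[
U \times_{R_1,R_2,\varepsilon} V = \bigl\{(x,y) \in V_1 \times V_2 : p_U(x) \leq R_1,\ p_V(y) \leq R_2,\ p_U(x)^{d_1} p_V(y)^{d_2} \leq \varepsilon\bigr\}
\]
up to a Lebesgue null set: any factorization $x = \alpha u$ with $u \in U$ forces $p_U(x) \leq |\alpha|$, and conversely the minimal choices $\alpha = p_U(x)$, $\beta = p_V(y)$ produce an admissible factorization.

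Applying Fubini together with the radial identity now gives
\[
\Lambda(U \times_{R_1,R_2,\varepsilon} V) = d_1 d_2\, \Lambda(U)\Lambda(V)\int_0^{R_1}\!\!\int_0^{R_2} \mathbf{1}[s^{d_1} t^{d_2} \leq \varepsilon]\, s^{d_1-1} t^{d_2-1}\, ds\, dt.
\]
The substitution $a = s^{d_1}$, $b = t^{d_2}$ with $A := R_1^{d_1}$ and $B := R_2^{d_2}$ collapses this to $\Lambda(U)\Lambda(V) \cdot I(A,B,\varepsilon)$, where $I(A,B,\varepsilon) := \int_0^A\!\int_0^B \mathbf{1}[ab \leq \varepsilon]\, da\, db$ is the area inside the rectangle $[0,A]\times[0,B]$ below the hyperbola $ab = \varepsilon$.

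Finally, splitting this region at $a = \varepsilon/B$ into the rectangle $[0,\varepsilon/B]\times[0,B]$ of area $\varepsilon$ and the region under $b = \varepsilon/a$ for $a \in [\varepsilon/B, A]$ of area $\int_{\varepsilon/B}^A \varepsilon/a\, da = \varepsilon \log(AB/\varepsilon)$ gives $I(A,B,\varepsilon) = \varepsilon(1 + \log(AB/\varepsilon))$, which is exactly the claimed formula. The main point requiring care — which I expect to be a mild technicality rather than a serious obstacle — is the measure-theoretic identification of $\{p_U \leq t\}$ with $tU$ for a balanced open $U$ that need not be convex; this is settled by the sandwich above together with $\Lambda((t+\eta) U) = (t+\eta)^{d_1}\Lambda(U)$ and passage to $\eta \downarrow 0$.
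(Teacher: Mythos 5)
Your proof is correct, but it follows a genuinely different route from the one in the paper. The paper first rescales the two factors to reduce to $R_1=R_2=1$ and then sandwiches $U\times_{1,1,\varepsilon}V$ between finite unions $\bigcup_n(\alpha_n U)\times(\beta_{n+1}V)$ and $\bigcup_n(\alpha_{n+1}U)\times(\beta_n V)$ along a subdivision of the hyperbola $\alpha^{d_1}\beta^{d_2}=\varepsilon$, recognising the resulting sums as Riemann sums for $\varepsilon+\int_\varepsilon^1\frac{\varepsilon}{x}\,dx$. You instead encode the set via the Minkowski functionals $p_U,p_V$, use the scaling identity $\Lambda(\{p_U\le t\})=t^{d_1}\Lambda(U)$ to compute the pushforward of Lebesgue measure under $(p_U,p_V)$, and reduce everything to the elementary planar integral $\int_0^A\int_0^B\mathbf{1}[ab\le\varepsilon]\,da\,db$; this is arguably more systematic and makes the appearance of the logarithm transparent, at the cost of a measure-theoretic identification that the paper's combinatorial sandwich avoids. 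One small imprecision: your claim that the \emph{minimal} choices $\alpha=p_U(x)$, $\beta=p_V(y)$ yield an admissible factorization is not literally true, since for an open balanced $U$ the infimum defining $p_U(x)$ need not be attained (e.g.\ $U=(-1,1)$, $x=\tfrac12$). The correct statement is that every $(x,y)$ satisfying the three inequalities \emph{strictly} admits a factorization with $\alpha,\beta$ slightly above the infima, and the residual set where one of the inequalities is an equality is Lebesgue null by the very identity $\Lambda(\{p_U\le t\})=\Lambda(\{p_U<t\})=t^{d_1}\Lambda(U)$ you establish; with that one-line repair the identification up to null sets, and hence the whole argument, goes through.
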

\begin{proof}
Scaling $V_1$ by $R_1$ and $V_2$ by $R_2$, we find
\begin{equation}\label{eq:rescale}
\Lambda(U\times_{R_1,R_2,\varepsilon} V) = R_1^{d_1}R_2^{d_2}v(\varepsilon/R_1^{d_1}R_2^{d_2}),
\end{equation}
where $v(\varepsilon):= \Lambda(U\times_{1,1,\varepsilon} V)$.
%Since $U$ and $V$ are balanced, we may replace $\alpha^{d_1}\beta^{d_2} \leq \varepsilon$ by
%$\alpha^{d_1}\beta^{d_2} = \varepsilon$ in \eqref{eq:logproduct} without changing the set.
For an ascending sequence $\varepsilon = \alpha^{d_1}_0 < \ldots < \alpha^{d_1}_N = 1$ and the corresponding descending sequence $\beta^{d_2}_i = \varepsilon/\alpha^{d_1}_i$, we find
\[
 A_N \subseteq U\times_{1,1,\varepsilon} V \subseteq B_N,
\]
where $A_N = \bigcup_{n=0}^{N-1}(\alpha_n U) \times (\beta_{n+1}V)$ and
$B_N = \bigcup_{n=0}^{N-1}(\alpha_{n+1} U) \times (\beta_{n}V)$. (The second inclusion uses that both $U$ and $V$ are balanced).
To find an underestimate for $v(\varepsilon)$, write $A_N$ as the disjoint union
\[
 A_N = (\alpha_0 U)\times (\beta_1 V) \sqcup \bigsqcup_{n=0}^{N-1}(\alpha_{n+1} U \setminus \alpha_{n} U) \times (\beta_{n+1}V).
\]
Since $U$ is balanced, $\alpha_{n} U \subseteq \alpha_{n+1} U$, so that
%$\Lambda(\alpha_{n+1} U \setminus \alpha_{n} U) = (\alpha^{d_1}_{n+1}-\alpha^{d_1}_{n})\Lambda(U)$, and
\[\Lambda(A_N) = \Lambda(U)\Lambda(V)\Big(\alpha_0^{d_1}\beta^{d_2}_1  + \sum_{n=0}^{N-1}\beta^{d_2}_{n+1}(\alpha^{d_1}_{n+1} - \alpha^{d_1}_{n})\Big).
\] Setting $x_n = \alpha^{d_1}_n$ and $y(x) = \frac{\varepsilon}{x}$, one sees that as the subdivision becomes finer, this converges to $\Lambda(U)\Lambda(V)(\varepsilon + \int_{\varepsilon}^{1}\frac{\varepsilon}{x}) dx$. Since the upper bound from $B_N$ gives the same result,  we have $v(\varepsilon) = \varepsilon(1 + \log(\frac{1}{\varepsilon}))\Lambda(U)\Lambda(V)$,
which combined with \eqref{eq:rescale} yields the desired result.
\end{proof}

\subsubsection{Rescaling representations by a character}
Let $G$ be a Lie group, and $(\pi,V)$ a real representation of dimension $d$.
Let $\chi \colon G \rightarrow \R^{+}$ be a character. Then $\pi^{\chi}(g) := \chi^{1/d}(g)\pi(g)$
is again a representation, and
\[\mathrm{det}(\pi^{\chi}(g)) = \chi(g)\mathrm{det}(\pi(g)).\]
The following lemma allows one to split a representation $(\pi,V)$ into a subrepresentation
$W \subseteq V$ and a quotient $V/W$, and simultaneously twist $\pi_W$ and
$\pi_{V/W}$ by a character $\chi$ in opposite directions.

%such a way that
%\[\mathrm{det}(\pi^{\chi^{-1}}_{W})\mathrm{det}(\pi^{\chi}_{V/W}) = \mathrm{det}(\pi)\]
%remains unchanged.

\begin{Lemma}[Shifting characters]\label{Lemma:shift}
Let $W \subseteq V$ be a (real) subrepresentation, and let $\cV_{W}$ and $\cV_{V/W}$ be open neighbourhood bases in $W$ and $V/W$ consisting of balanced sets. Let $F \subseteq G$ be a relatively compact, symmetric subset.
Then
\begin{equation}
 \delta_{F}^{\pi} \geq \delta_{F}^{\pi_{W}^{\chi^{-1}}}(\cV_{W})\cdot
 \delta_{F}^{\pi_{V/W}^{\chi}}(\cV_{V/W})
\end{equation}
for any continuous character $\chi \colon G \rightarrow \R^{+}$.
\end{Lemma}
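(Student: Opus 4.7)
The plan is to adapt the proof of Lemma~\ref{Lemma:BenjaminsTruc}, replacing the rectangular product sets $U_W\times\varepsilon U_{V/W}$ by the logarithmic-volume products $N_{R_1,R_2,\varepsilon} := U_W \times_{R_1,R_2,\varepsilon} U_{V/W}$ from \eqref{eq:logproduct}. The decisive feature of these sets is their behaviour under the one-parameter group $\Phi\colon \R^{+} \to \mathrm{GL}(V)$ defined on $V \cong W\oplus V/W$ (via an inner product) by $\Phi(\mu)(w\oplus w') := (\mu^{1/d_1}w,\, \mu^{-1/d_2}w')$, where $d_1 := \dim W$ and $d_2 := \dim V/W$: one has $\Phi(\mu)\,N_{R_1,R_2,\varepsilon} = N_{\mu^{1/d_1}R_1,\,\mu^{-1/d_2}R_2,\,\varepsilon}$, so that the parameter $\varepsilon$ is preserved. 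This is precisely the scaling needed to absorb the character $\chi$ into the reparametrization.

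Writing $\pi_W(g) = \chi(g)^{1/d_1}\pi_W^{\chi^{-1}}(g)$ and $\pi_{V/W}(g) = \chi(g)^{-1/d_2}\pi_{V/W}^{\chi}(g)$, the pull-back of $(\alpha u, \beta v')\in N_{R_1,R_2,\varepsilon}$ by $\pi(g^{-1})$ reads
\[
\pi(g^{-1})(\alpha u, \beta v') = \bigl(\tilde\alpha\,\pi_W^{\chi^{-1}}(g^{-1})u + \beta\,P_W\pi(g^{-1})v',\; \tilde\beta\,\pi_{V/W}^{\chi}(g^{-1})v'\bigr),
\]
with $\tilde\alpha := \alpha\chi(g)^{-1/d_1}$ and $\tilde\beta := \beta\chi(g)^{1/d_2}$; crucially, $\tilde\alpha^{d_1}\tilde\beta^{d_2} = \alpha^{d_1}\beta^{d_2}$. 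Setting $D := \sup_{g\in F}\max(\chi(g),\chi(g)^{-1})$ and imitating the erosion argument of Lemma~\ref{Lemma:BenjaminsTruc} via Proposition~\ref{Prop:GroepEnSchuif}, I would establish an inclusion of the form
\[
(U_W^{r})^{F,\pi_W^{\chi^{-1}}}\,\times_{R_1/D^{1/d_1},\,R_2/D^{1/d_2},\,\varepsilon'}\,U_{V/W}^{F,\pi_{V/W}^{\chi}} \;\subseteq\; \bigcap_{g\in F}\pi(g)\,N_{R_1,R_2,\varepsilon},
\]
where $A^{F,\sigma} := \bigcap_{g\in F}\sigma(g)A$ and $\varepsilon'$ equals $\varepsilon$ up to a constant depending only on $D$, the $d_i$, and the mixing bound below.

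The main obstacle is controlling the mixing term $\beta\,P_W\pi(g^{-1})v'$: its norm is of order $|\beta|$ uniformly in $g\in F$, whereas on the hyperboloid boundary $|\alpha|^{d_1}|\beta|^{d_2}\approx\varepsilon$ the reparametrized $|\tilde\alpha|$ can be arbitrarily small compared to $|\beta|$, so no fixed erosion $r$ of $U_W$ absorbs the error directly into $|\tilde\alpha|\,U_W$. I would handle this by invoking Lemma~\ref{lem:balanceboundary} to re-parametrize the pulled-back point with enlarged first coordinate $\alpha'' := |\tilde\alpha| + c|\beta|$, where $c$ bounds the $U_W$-gauge of $P_W\pi(g^{-1})v'$ uniformly over $g \in F$ and $v'\in U_{V/W}$; by convexity, $(\alpha'')^{d_1}\tilde\beta^{d_2}$ is bounded by a constant multiple of $|\alpha|^{d_1}|\beta|^{d_2} + D|\beta|^d$, so the single hyperbolic constraint of $N$ is replaced by two constraints $|\alpha|^{d_1}|\beta|^{d_2} \lesssim \varepsilon$ and $|\beta|^d \lesssim \varepsilon$ --- both of which remain compatible with the logarithmic-volume asymptotics of Lemma~\ref{lem:logaritmetruc}.

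Finally, by Lemma~\ref{lem:logaritmetruc}, $\Lambda(N_{R_1,R_2,\varepsilon}) = \varepsilon(1+\log(R_1^{d_1}R_2^{d_2}/\varepsilon))\Lambda(U_W)\Lambda(U_{V/W})$, with the analogous formula holding for the set on the left of the inclusion. The multiplicative constants relating the primed parameters to $(R_1, R_2, \varepsilon)$ contribute only $O(1)$ terms inside the logarithm, which become negligible in the limit $\varepsilon\downarrow 0$ with $R_1, R_2$ fixed; combined with Proposition~\ref{Prop:GroepEnSchuif} to send $r\downarrow 0$, this yields
\[
\liminf_{\varepsilon\downarrow 0}\,\delta_F^V(N_{R_1,R_2,\varepsilon}) \;\geq\; \delta_F^{\pi_W^{\chi^{-1}}}(U_W)\cdot\delta_F^{\pi_{V/W}^{\chi}}(U_{V/W}).
\]
A diagonal selection of parameters $U_W^n, U_{V/W}^n, R_1^{(n)}, R_2^{(n)}, \varepsilon^{(n)}$, analogous to the end of the proof of Lemma~\ref{Lemma:BenjaminsTruc}, then assembles the $N_{R_1,R_2,\varepsilon}$ into a symmetric neighbourhood basis $\cV_V$ of $0\in V$ witnessing the claimed bound $\delta_F^{\pi} \geq \delta_F^{\pi_W^{\chi^{-1}}}(\cV_W)\cdot\delta_F^{\pi_{V/W}^{\chi}}(\cV_{V/W})$.
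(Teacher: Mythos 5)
Your overall strategy --- replacing the rectangles of Lemma~\ref{Lemma:BenjaminsTruc} by the logarithmic-volume sets $U\times_{R_1,R_2,\varepsilon}V$, exploiting that the twist by $\chi$ preserves the parameter $\varepsilon$, eroding $U_W$ to absorb the off-diagonal term, computing volumes via Lemma~\ref{lem:logaritmetruc}, and diagonalizing at the end --- is exactly the paper's. You also correctly identify the crux: the mixing term $\beta P_W\pi(g)v$ has size of order $|\beta|$, which near the thin end of the hyperbolic shell dwarfs $|\tilde\alpha|$. But your resolution of this crux is where the argument breaks. The paper does \emph{not} keep $R_1,R_2$ fixed: it sets $R_1=\varepsilon^a$, $R_2=\varepsilon^b$ and observes that on the boundary $|\alpha|^{d_1}|\beta|^{d_2}=\varepsilon$ (Lemma~\ref{lem:balanceboundary}) one has $|\beta|/|\tilde\alpha|\leq (D/\varepsilon)^{1/d_1}R_2^{1+d_2/d_1}$, so the erosion $(U_W)^{r}$ with $r=\rho(D/\varepsilon)^{1/d_1}R_2^{1+d_2/d_1}$ absorbs the error; choosing $b>1/(d_1+d_2)$ makes $r(\varepsilon)\to 0$ (so Proposition~\ref{Prop:GroepEnSchuif} applies), while $a>0$ with $d_1a+d_2b<1$ keeps $R_1^{d_1}R_2^{d_2}/\varepsilon\to\infty$ so that the logarithmic factors cancel. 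Note also that with $R_1,R_2$ fixed the sets $U_W\times_{R_1,R_2,\varepsilon}U_{V/W}$ all contain $R_1U_W\times\{0\}$ and hence can never form a neighbourhood basis of $0\in V$.

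Your proposed workaround --- enlarging the first coefficient to $\alpha''=|\tilde\alpha|+c|\beta|$ --- has two concrete defects. First, rewriting $\tilde\alpha u''+\beta w$ as $\alpha''u'''$ with $u'''\in U_W$ amounts to showing $\frac{\tilde\alpha}{\alpha''}u''+\frac{\beta}{\alpha''}w\in U_W$, which is a convexity statement; the sets in $\cV_W$ are only assumed balanced, and balanced open sets need not be convex, so this step is unjustified (the erosion property $U^r+B_r(0)\subseteq U$ is what substitutes for convexity in the paper, but it requires $\|\tfrac{\beta}{\tilde\alpha}w\|\leq r$, i.e.\ precisely the bound you are trying to avoid). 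Second, even granting it, the extra constraint $|\beta|^{d_1+d_2}\lesssim\varepsilon$ truncates the hyperbolic shell at $|\beta|\lesssim\varepsilon^{1/(d_1+d_2)}$; the computation of Lemma~\ref{lem:logaritmetruc} then gives the truncated set volume $\sim\varepsilon\cdot\frac{d_1}{d_1+d_2}\log(1/\varepsilon)$ rather than $\sim\varepsilon\log(1/\varepsilon)$, so the ratio of logarithmic factors tends to $d_1/(d_1+d_2)<1$ instead of $1$, and you would only obtain $\delta_F^{\pi}\geq\frac{d_1}{d_1+d_2}\,\delta_F^{\pi_W^{\chi^{-1}}}(\cV_W)\,\delta_F^{\pi_{V/W}^{\chi}}(\cV_{V/W})$, strictly weaker than the claim. (A further small point: $\alpha''$ can exceed $R_1$, violating the first constraint defining $N_{R_1,R_2,\varepsilon}$.) The repair is to abandon the fixed-$(R_1,R_2)$ limit and couple $R_1,R_2$ to $\varepsilon$ as in the paper.
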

\begin{proof}
Choose an inner product on $V$, and identify $V/W$ with $W^{\perp}\subseteq V$.
Let $d_1 = \mathrm{dim}(W)$ and $d_2 = \mathrm{dim}(V/W)$.
Define $\rho := \sup_{g\in F}\|\pi(g)\|$ and $D := \sup_{g\in F}|\chi(g)|$.

Let $U\in \cV_{U}$, let $V \in \cV_{V/W}$, and suppose that both are contained in a ball with radius 1.
Let
$(\alpha u, \beta v)$ be an element of $(U_{r})^{F} \times_{R_1,R_2,\varepsilon} V^F$ for
\begin{equation}
r = \rho R_2^{1 + \frac{d_2}{d_1}}(D/\varepsilon)^{1/d_1}.
\end{equation}
Then
\[
\pi(g)\begin{pmatrix}\alpha u\\ \beta v\end{pmatrix} =
\begin{pmatrix}
\alpha' \pi^{\chi^{-1}}_{W}(g)u + \beta P_{W}\pi(g)v\\
\beta'  \pi^{\chi}_{V/W}(g) v\end{pmatrix}
\]
with $\alpha' = \alpha \chi^{1/d_1}(g)$ and  $\beta' = \beta\chi(g)^{-1/d_2}$,
and $P_W$ the orthogonal projection onto $W$.
If $\alpha \neq 0$,
this can be written as $(\alpha'u', \beta'v')$ for 
\[v' = \pi^{\chi}_{V/W}(g) v, \quad \text{and} \quad
u' = \pi_{W}^{\chi^{-1}}(g)u + \frac{\beta}{\alpha'}P_W\pi(g)v.
\] % or $u' = \beta P_W \pi(g)v$ (if $\alpha = 0$).

Note that since $v \in V^{F}$, we have $v' \in V$.
To see that $u' \in U$, note that since $u \in (U_{r})^F$ we have $\pi_{W}^{\chi^{-1}}(g)u \in U_r$.
Further, $\|P_W \pi(g)w\| \leq \rho$. % for $\rho = \sup_{g\in F}\|\pi(g)\|$.
Since $U$ is balanced, $U_r$ is balanced as well. 
By Lemma~\ref{lem:balanceboundary}, we may assume that 
$|\alpha^{d_1}\beta^{d_2}| = \varepsilon$. Then
\[\frac{|\beta|}{|\alpha|} \leq (1/\varepsilon)^{1/d_1}R_2^{1 + \frac{d_2}{d_1}},\] and
$\frac{|\beta|}{|\alpha'|} \leq (D/\varepsilon)^{1/d_1}R_2^{1 + \frac{d_2}{d_1}}$. % for $D := \sup_{g\in F}|\chi(g)|$. 
Putting this together, we find $\|\frac{\beta}{\alpha'}P_W\pi(g)v\| \leq r$
for $r$ as above, and we conclude that $u'\in U$.

% $\alpha'\beta' =\alpha\beta = \varepsilon$
Since $|\alpha'{}^{d_1}\beta'{}^{d_2}| = |\alpha^{d_1}\beta^{d_2}| = \varepsilon$ and $|\alpha'|\leq D^{1/d_1}R_1 = : R'_1$, and
$|\beta'| \leq D^{1/d_2}R_2 = : R'_2$, we conclude that
\[
 \pi(g)\Big((U_{r})^{F} \times_{R_1,R_2,\varepsilon} V^F\Big) \subseteq U \times_{R_1',R_2',\varepsilon}V
\]
for all $g\in F$, and hence, since $F$ is symmetric,
\[
	(U \times_{R_1',R_2',\varepsilon}V)^{F} \supseteq (U_{r})^{F} \times_{R_1,R_2,\varepsilon} V^F.
\]
By Lemma~\ref{lem:logaritmetruc}, it follows that
\begin{eqnarray}
 \delta_{F}^{V}(U \times_{R_1',R_2',\varepsilon}V) &\geq& \frac{\Lambda\Big((U_{r})^{F} \times_{R_1,R_2,\varepsilon} V^F\Big)}{\Lambda\Big(U \times_{R_1',R_2',\varepsilon}V\Big)} \label{eq:bakbeest}\\
 &=& \frac{\Lambda\big((U_{r})^{F}\big)}{\Lambda(U)} \cdot \frac{\Lambda(V^{F})}{\Lambda(V)}\cdot
 \frac{
	\varepsilon\left(1 + \log\Big(\frac{R_1^{d_1}R_2^{d_2}}{\varepsilon}\Big)\right)
	}
	{
	\varepsilon\left(1 + \log\Big(\frac{R'_{1}{}^{d_1}R'_2{}^{d_2}}{\varepsilon}\Big)\right)
	}.\nonumber
\end{eqnarray}

We choose $R_1(\varepsilon)$ and $R_2(\varepsilon)$ such that
$R_1(\varepsilon)\downarrow 0$ and $R_2(\varepsilon)\downarrow 0$ as $\varepsilon\downarrow 0$,
for example by setting $R_1(\varepsilon) = \varepsilon^a$ and $R_2(\varepsilon) = \varepsilon^b$
with $a,b>0$.
Further, we can achieve $r(\varepsilon) \downarrow 0$ for
\[r(\varepsilon) = \rho D^{1/d_1}\frac{R_2(\varepsilon)^{1 + \frac{d_2}{d_1}}}{\varepsilon^{1/d_1}},\]
by choosing $b >1/(d_1 + d_2)$.
The first factor in \eqref{eq:bakbeest} then converges to the constant $\delta_{F}^{\pi^{\chi^{-1}}_{W}}(U) = \Lambda(U^{F})/\Lambda(U)$.
The second factor is $\delta_{F}^{\pi^{\chi}_{V/W}}(V)$.
For the third factor,
we can achieve
\[\frac{R_1^{d_1}(\varepsilon)R_2(\varepsilon)^{d_2}}{\varepsilon} \rightarrow \infty\] for $\varepsilon \downarrow 0$ by choosing $a>0$ such that $d_1a + d_2 b < 1$.
The third factor in~\eqref{eq:bakbeest} then converges to $1$, because $R'_1{}^{d_1}R_2'{}^{d_2}$ differs from $R_1{}^{d_1}R_2{}^{d_2}$ by a multiplicative constant $D^2$.
Note that the inequalities $a>0$, $d_1a + d_2 b < 1$ and $b>\frac{1}{d_1 + d_2}$ do admit solutions.
\end{proof}

%\subsubsection{Representations of determinant 1}

\section{Connected Lie groups}

If $G$ is a connected Lie group and $(\pi, V)$ is a finite dimensional real $G$-re\-pre\-sen\-ta\-tion, then the character
$\chi(g) = \mathrm{det}(\pi(g))$ takes values in $\R^{>0}$. 
\begin{Definition}[Rescaled representations]
We denote by 
 %\begin{equation}
 $
 \overline{\pi}(g) := \mathrm{det}\big(\pi(g)\big)^{-\frac{1}{d}}\pi(g)
$
% \end{equation}
 the representation on $V$ that is rescaled to have determinant 1.
 \end{Definition}
 
 \begin{Definition}
 We denote by $\overline{\delta}{}_{F}^{V}$ the supremum of $\delta_{F}^{(\overline{\pi},V)}(\cV)$ over all \emph{balanced} open neighbourhood bases
 of $0\in V$.
 \end{Definition}

 Note that if $(\pi,V)$ has determinant 1, then $\delta_{F}^{V}\geq \overline{\delta}{}^{V}_{F}$.
 From Lemma~\ref{Lemma:shift}, we immediately obtain the following splitting theorem:
 \begin{Lemma}[Rescaled splitting]\label{Lemma:RescaledSplitting}
 Let $G$ be a connected Lie group, and $W \subseteq V$ a subrepresentation of the (real)
 representation $(\pi, V)$. Then
\begin{equation}
 \overline{\delta}{}_{F}^{V} \geq \overline{\delta}{}_{F}^{W} \cdot
 \overline{\delta}{}_{F}^{V/W}.
\end{equation}
\end{Lemma}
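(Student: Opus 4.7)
The plan is to apply Lemma~\ref{Lemma:shift} to the rescaled representation $(\overline{\pi}, V)$ with a carefully chosen continuous character $\chi \colon G \to \R^{>0}$. Writing $d_1 = \dim W$, $d_2 = \dim V/W$, and $d = d_1 + d_2$, connectedness of $G$ ensures that $\det \pi_W$ and $\det \pi_{V/W}$ take values in $\R^{>0}$. The natural candidate is
\[
\chi(g) := \det(\overline{\pi}_W(g)) = (\det \pi_W(g))^{d_2/d} \cdot (\det \pi_{V/W}(g))^{-d_1/d},
\]
namely the character by which $\overline{\pi}_W$ must be twisted so as to become determinant-$1$.

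A direct computation, using $\det \pi = \det \pi_W \cdot \det \pi_{V/W}$, then verifies the key identities
\[
(\overline{\pi}_W)^{\chi^{-1}} = \overline{\pi_W}, \qquad (\overline{\pi}_{V/W})^{\chi} = \overline{\pi_{V/W}}.
\]
Applying Lemma~\ref{Lemma:shift} to $\overline{\pi}$, $W \subseteq V$, and this $\chi$, with balanced open neighbourhood bases $\cV_W$ and $\cV_{V/W}$ of the origin (which exist, e.g.\ by taking balls), gives
\[
\delta_F^{\overline{\pi}} \geq \delta_F^{\overline{\pi_W}}(\cV_W) \cdot \delta_F^{\overline{\pi_{V/W}}}(\cV_{V/W}).
\]

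To upgrade the left-hand side from $\delta_F^{\overline{\pi}}$ to $\overline{\delta}{}_F^{V}$ (the supremum over \emph{balanced} bases), one has to check that the neighbourhood basis of $V$ constructed in the proof of Lemma~\ref{Lemma:shift} is itself balanced. This is immediate from the construction: the sets $U \times_{R_1', R_2', \varepsilon} V$ are built from balanced $U$ and $V$, and for $|c| \leq 1$ the rescaling $(\alpha u, \beta v) \mapsto (c \alpha u, c \beta v)$ stays in the set because $|c|^d |\alpha^{d_1} \beta^{d_2}| \leq |\alpha^{d_1} \beta^{d_2}| \leq \varepsilon$ while $|c\alpha| \leq R_1'$ and $|c \beta| \leq R_2'$. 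Taking the supremum over balanced $\cV_W$ and $\cV_{V/W}$ on the right then yields the stated inequality. The only real step of substance is the character identity $(\overline{\pi}_W)^{\chi^{-1}} = \overline{\pi_W}$ (and its quotient analogue); I expect this bookkeeping to be the main, though quite minor, obstacle.
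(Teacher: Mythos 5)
Your proof is correct and is precisely the argument the paper intends: the paper derives this lemma ``immediately'' from Lemma~\ref{Lemma:shift}, and your choice $\chi = \det(\overline{\pi}_W)$ together with the verified identities $(\overline{\pi}_W)^{\chi^{-1}} = \overline{\pi_W}$ and $(\overline{\pi}_{V/W})^{\chi} = \overline{\pi_{V/W}}$ is exactly the required bookkeeping. Your further observation that the sets $U\times_{R_1',R_2',\varepsilon}V$ constructed in the proof of Lemma~\ref{Lemma:shift} are themselves balanced is the one point that genuinely needs checking in order to pass from $\delta_F$ to $\overline{\delta}{}_F$, and you check it correctly.
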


\subsection{Connected solvable Lie groups}

Recall that a connected Lie group $R$ is solvable if and only if its Lie algebra $\fr$ is solvable \cite[Theorem~3.18.8]{Varadarajan}.
From the above rescaled splitting lemma, it is not hard to see that $\overline{\delta}{}_{F}^{V} = 1$ for connected solvable Lie groups.

\begin{Theorem}
Let $R$ be a connected Lie group with solvable Lie algebra $\fr$, and let
$F \subseteq R$ be a relatively compact, symmetric subset.
Then $\overline{\delta}{}_{F}^{V} = 1$ for any real representation $(\pi,V)$.
\end{Theorem}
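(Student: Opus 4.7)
The plan is to induct on $\dim V$. The base case $V = \{0\}$ is vacuous; for the inductive step I will produce a nontrivial $R$-invariant subspace $W \subseteq V$ of dimension $1$ or $2$, apply the Rescaled Splitting Lemma~\ref{Lemma:RescaledSplitting}, handle the quotient by induction, and dispose of the two base cases by direct analysis.

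To find $W$: since $\fr$ is solvable, so is $\fr_\C$, and Lie's Theorem applied to the complex $\fr_\C$-representation $V_\C = V \otimes_\R \C$ produces a weight vector $v \in V_\C$ with weight $\lambda \in \fr_\C^*$. If $\lambda|_\fr$ is $\R$-valued, then whichever of $\mathrm{Re}(v), \mathrm{Im}(v)$ is nonzero spans an $\fr$-invariant line in $V$. Otherwise $W := V \cap (\C v + \C\overline{v})$ is a real $\fr$-invariant subspace of dimension $2$; it is irreducible over $\R$, because any real line in $W$ would complexify to $\C v$ or $\C\overline{v}$, neither of which is stable under complex conjugation when $\lambda \neq \overline{\lambda}$. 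In either case, connectedness of $R$ promotes $\fr$-invariance to $R$-invariance.

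Lemma~\ref{Lemma:RescaledSplitting} then gives $\overline{\delta}{}_F^V \geq \overline{\delta}{}_F^W \cdot \overline{\delta}{}_F^{V/W}$, and the inductive hypothesis supplies $\overline{\delta}{}_F^{V/W} = 1$, so everything comes down to showing $\overline{\delta}{}_F^W = 1$. If $\dim W = 1$, then $\pi|_W \colon R \to \R^*$ takes values in $\R^{>0}$ by connectedness of $R$, and coincides with $\det(\pi|_W)$, so $\overline{\pi}|_W$ is the trivial representation and every neighbourhood is invariant. If $\dim W = 2$ and $W$ is real-irreducible, I equip $W_\C = \C v \oplus \C\overline{v}$ with the $\fr$-commuting operator that is multiplication by $i$ on $\C v$ and by $-i$ on $\C\overline{v}$; this commutes with complex conjugation and so descends to a complex structure $J$ on $W$ that commutes with $\pi(R)$. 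Hence $\pi(R)$ factors through $\C^* \hookrightarrow \mathrm{GL}_2(\R)$, and rescaling by $\det^{-1/2}$ places $\overline{\pi}(R)$ inside the compact group $SO(2)$. Then $\rho := \sup_{g \in F}\|\overline{\pi}(g)\| = 1$, and the trivial bound Proposition~\ref{Prop:trivialbound} applied to the balanced basis of Euclidean balls yields $\overline{\delta}{}_F^W \geq 1$.

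The step requiring the most care is the $2$-dimensional irreducible case, where one must verify that the complex structure $J$ genuinely commutes with $\pi(R)$ (not just with $\pi(\fr)$, which uses connectedness of $R$) and that the $SO(2)$-invariant Euclidean balls form an admissible balanced neighbourhood basis in Definition~\ref{def:LAlevel}. The reverse inequality $\overline{\delta}{}_F^V \leq 1$ is automatic: because $\overline{\pi}(g)$ has determinant $1$, it preserves Lebesgue measure, and therefore $\Lambda(\bigcap_{g \in F}\overline{\pi}(g)U) \leq \Lambda(U)$ for every relatively compact open $U$.
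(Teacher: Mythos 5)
Your proposal is correct and follows essentially the same route as the paper's proof: induction on $\dim V$, Lie's Theorem producing a weight vector whose real span gives a $1$- or $2$-dimensional $R$-invariant subspace $W$, the Rescaled Splitting Lemma~\ref{Lemma:RescaledSplitting}, and the observation that $\overline{\pi}|_W$ is trivial or acts by rotations (your complex structure $J$ is just a repackaging of the paper's integrated character $e_{\lambda}$). The extra care you take with the upper bound $\overline{\delta}{}_F^{V}\leq 1$ and with $R$-versus-$\fr$-invariance is welcome but does not change the argument.
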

\begin{proof}
We proceed by induction on $d = \mathrm{dim}_{\R}(V)$.
For $d=1$, the statement holds because $\overline{\pi}$ is the trivial representation.
Suppose the statement holds for all representations of dimension less than $d$.
By Lie's Theorem, the complexification $V_{\C} = V \otimes_{\R}\C$ admits a nonzero vector $v_{\lambda}$ 
and a weight $\lambda \colon \fr \rightarrow \C$
such that $d\pi(x)v_{\lambda} = \lambda(x) v_{\lambda}$ for all $x\in \fr$.
Since $R$ is connected and since the infinitesimal action of $\fr$ on $\C\mathrm{P}(V)$ has $[v_{\lambda}] \in \C\mathrm{P}(V)$
as a fixed point, the action of $R$ on $\C\mathrm{P}(V)$ fixes the ray $[v_{\lambda}] = \{z v_{\lambda}\,;\, z \in \C^{\times}\}$.
The
homomorphism $\lambda$ therefore integrates to a character $e_{\lambda} \colon R \rightarrow \C^{\times}$,
and $\pi(g)v_{\lambda} = e_{\lambda}(g)v_{\lambda}$ for all $g\in R$. 
Let $U_{\lambda} \subseteq V$ be the real subrepresentation spanned by the real vectors $\frac{1}{2}(v_{\lambda} + \overline{v}_{\lambda})$ and 
$\frac{1}{2i}(v_{\lambda} - \overline{v}_{\lambda})$, and let $\chi(g) := |e_{\lambda}(g)|$.
If $v_{\lambda}$ is real, then $U_{\lambda}$ is 1-dimensional, and the representation $(e_{\lambda}, V_{\lambda})$
becomes trivial when rescaled to have determinant $1$.
If $v_{\lambda}$ is not real, then $U_{\lambda}$ is 2-dimensional, and the real representation $\overline{e}_{\lambda}(g) = \chi(g)^{-1}e_{\lambda}(g)$ (rescaled to have determinant 1) acts by rotations.
Either way, $U_{\lambda}$ admits an invariant neighbourhood basis for the rescaled representation, so 
\[\overline{\delta}{}_{F}^{U_{\lambda}} =1.\]
Since the quotient representation $(\pi_{V/W}, V/U_{\lambda})$ is of dimension less than $d$, it too has $\overline{\delta}{}_{F}^{V/U_{\lambda}} = 1$. 
%when rescaled to have determinant one. 
So $\overline{\delta}{}_{F}^{V} \geq 1$ by  Lemma~\ref{Lemma:RescaledSplitting}.
\end{proof}

Since a connected Lie group is unimodular if and only if the adjoint representation has determinant one, 
we have  $1 \geq \delta^{\fr}_{F} \geq \overline{\delta}{}^{\fr}_{F}$, and
the following result is an immediate consequence.
\begin{Corollary}\label{Thm:Solvable}
If $R$ is a connected, unimodular Lie group with solvable Lie algebra $\fr$, then $\delta^{F}_{\fr} = 1$ for any relatively compact, symmetric subset $F \subseteq R$.
\end{Corollary}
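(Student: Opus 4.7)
The plan is to deduce the corollary as an immediate consequence of the preceding theorem once we translate the hypothesis of unimodularity into the language of the rescaled representation $\overline{\mathrm{Ad}}$. A connected Lie group is unimodular precisely when $\det(\Ad_g)=1$ for every $g \in R$, so the character $\chi(g) = \det(\Ad_g)$ is identically equal to $1$ and the rescaled adjoint representation $\overline{\Ad}$ coincides with $\Ad$ itself.

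Under this coincidence, the quantities $\overline{\delta}{}^{\fr}_F$ and $\delta^{\fr}_F$ differ only in that the former takes the supremum over \emph{balanced} open neighbourhood bases of $0 \in \fr$, whereas the latter allows arbitrary symmetric open neighbourhood bases. Since every balanced symmetric subset is symmetric, the supremum in $\overline{\delta}{}^{\fr}_F$ is taken over a smaller family, and so $\delta^{\fr}_F \geq \overline{\delta}{}^{\fr}_F$. Applying the preceding theorem to the adjoint representation of $R$ on $\fr$ yields $\overline{\delta}{}^{\fr}_F = 1$, hence $\delta^{\fr}_F \geq 1$.

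For the matching upper bound I would invoke the elementary observation that, for any nonempty $F$ and any fixed $g_0 \in F$, the intersection $\bigcap_{g\in F}\Ad_g(U)$ is contained in $\Ad_{g_0}(U)$. Unimodularity gives $|\det(\Ad_{g_0})|=1$, so $\Lambda(\Ad_{g_0}(U)) = \Lambda(U)$, which forces $\delta^{\fr}_F(U) \leq 1$ for every symmetric open neighbourhood $U$ of $0$, and therefore $\delta^{\fr}_F \leq 1$. Combining the two bounds yields $\delta^{\fr}_F = 1$.

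There is no substantive obstacle at this stage: all of the work is already encoded in the preceding theorem, whose proof in turn leans on Lie's theorem for the root decomposition, on the shifting-characters Lemma~\ref{Lemma:shift} for the inductive step, and on the rescaled splitting Lemma~\ref{Lemma:RescaledSplitting}. The corollary itself is merely a bookkeeping translation from $\overline{\delta}{}^{\fr}_F$ back to the classical constant $\delta^{\fr}_F$ appearing in Theorem~\ref{Thm:ncDeLeeuw}, made possible by the equivalence of unimodularity with $\det(\Ad) \equiv 1$.
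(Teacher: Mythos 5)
Your proposal is correct and follows exactly the paper's route: the paper derives the corollary from the chain $1 \geq \delta^{\fr}_F \geq \overline{\delta}{}^{\fr}_F = 1$, using unimodularity to identify $\overline{\Ad}$ with $\Ad$ and invoking the preceding theorem for the lower bound. You merely make explicit the two inequalities the paper leaves implicit (balanced bases form a subfamily of symmetric ones; the intersection is contained in a single measure-preserving image $\Ad_{g_0}(U)$), both of which are sound.
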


%In the same vein, we find $\delta^F_{\fr} \geq D^{-1}$ for $D := \sup_{g\in F}\mathrm{det}(\mathrm{Ad}(g))$ if
% $R$ is not unimodular.

From Theorem~\ref{Thm:ncDeLeeuw}, it then follows that 
\begin{equation}\label{eq:boundSolvable}
\|T_{m|_{\Gamma}} \colon L_p(\widehat{\Gamma}) \rightarrow L_p(\widehat{\Gamma})\| \leq 
\|T_{m} \colon L_p(\widehat{R}) \rightarrow L_p(\widehat{R})\|
\end{equation}
for any discrete subgroup $\Gamma$ of a connected, solvable Lie group $R$.

This is in agreement with results obtained earlier in \cite{CPPR15}; since every solvable Lie group is amenable \cite{Day57}, and since every 
discrete subgroup $\Gamma$ of an amenable group is amenable, 
it follows from \cite[Theorem~8.7]{CPPR15} that $R$ has small almost-invariant neighbourhoods with respect to $\Gamma$,
and \eqref{eq:boundSolvable} follows from \cite[Theorem~A]{CPPR15}.

\subsection{Connected Lie groups}\label{sec:ConnectedLieGroups}

We end with some remarks on $\delta^{\fg}_{F}$ for a connected unimodular Lie group $G$ with Lie algebra $\fg$.
Let 
$\fr \subseteq \fg$ be the radical of $\fg$, i.e.\ the maximal solvable ideal. 
Then $\fr$ integrates to a closed, connected, normal Lie subgroup $R \subseteq G$, 
and the semisimple quotient $S := G/R$ has Lie algebra $\fs := \fg/\fr$ \cite[Thm.~3.18.13]{Varadarajan}.
%\begin{Remark}
%By the Levi-Mal'tsev decomposition, $\fg = \fr \rtimes \fs$ for a semisimple subalgebra $\fs \subseteq \fg$.
%If $G$ is simply connected, then $S$ can be realized as a closed, maximal semisimple Lie subgroup 
%$S \subseteq G$ such that $S \cap R = \{e\}$ \cite[Thm.~3.18.13]{Varadarajan}, so 
%in that case $G = R \rtimes S$. In general, it will not be necessary for us to assume that $G$ is simply connected.
%\end{Remark}
Let $F \subseteq G$ be a relatively compact, symmetric subset.
Using the Reduction Lemma~\ref{Lemma:BenjaminsTruc} on the $G$-subrepresentation $\fr \subseteq \fg$, we obtain a bound 
\begin{equation}\delta_{F}^{\fg} \geq a_{\mathrm{ss}} a_{\mathrm{rad}} \end{equation} 
with two types of contributions: the `semisimple' contribution $a_{\mathrm{ss}} = \delta^{\fg/\fr}_{F}$ and the 
contribution $a_{\mathrm{rad}} = \delta_{F}^{\fr}$ from the radical.

\subsubsection{The semisimple contribution $a_{\mathrm{ss}}$ from $\fs = \fg/\fr$.}
%For a connected Lie group $G$, 
%The Reducion Lemma applied to the subrepresentation $\fr \subseteq \fg$ yields 
%$\delta^{\fg}_{F} \geq \delta^{\fg/\fr}_{F}\delta^{\fr}_{F}$. 
The factor $a_{\mathrm{ss}} := \delta^{\fg/\fr}_{F}$ depends only on the semisimple quotient $S$.
Indeed, since the adjoint action of $\fr$ on $\fs = \fg/\fr$ is trivial, the action of $G$ on $\fs$ is trivial on the connected subgroup $R$, and
the representation of $G$ on $\fg/\fr$ factors through the adjoint representation of $S$.
It follows that $a_{\mathrm{ss}}$ is equal to $\delta^{\fs}_{[F]}$ for $[F] \subseteq S$.
Let $B$ be the Killing form on $\fs$, and $\theta$ a Cartan involution.
By \cite[Theorem~8.1]{CJKM23}, we have 
\begin{equation}
a_{\mathrm{ss}} \geq \rho_{S}^{-d/2},
\end{equation}
where $\rho_{S} = \sup_{s\in [F]}\|\mathrm{Ad}_{s}\|$ is the maximal norm of $s \in [F]\subseteq S$ in the adjoint representation, 
with respect to the inner product $B_{\theta}(x,y) = -B(x, \theta(y))$. The exponent $d$ is the maximal dimension of a nilpotent orbit in $\fs$.
Note that $d$ is an even integer, 
since the adjoint orbit in a semisimple Lie algebra is a symplectic manifold. 
In terms of the Lie algebra $\fs$, it can be expressed as 
$d = \mathrm{dim}_{\R}(\fs) - \mathrm{min}_{x\in \cN} \mathrm{dim}_{\R}(\fs_{x})$, where the second term is the minimal dimension 
of the stabilizer $\fs_{x}$, where $x$ runs over the nilpotent cone $\cN \subseteq \fs$ \cite[Remark~8.2]{CJKM23}.

\subsubsection{The contribution $a_{\mathrm{rad}}$ from the radical.}

Since $G$ is unimodular, the adjoint representation $(\Ad_{G}, \fg)$ has determinant~1. 
 As the semisimple quotient $S = G/R$ is unimodular as well, 
 the adjoint representation $(\Ad_{S}, \fs)$ of $S$ also has determinant 1.
 It follows that the restriction $(\pi, \fr)$ of the adjoint $G$-representation to the radical $\fr\subseteq \fg$ has 
 determinant 1 as well, so that $a_{\mathrm{rad}} = \delta^{\fr}_{F} \geq \overline{\delta}{}_{F}^{\fr}$.

Recall from \S\ref{sec:LieThm} that the real $G$-representation $V^k(\fr) = \fr^k/\fr^{k+1}$ decomposes as 
\begin{equation}
V^k(\fr) = \bigoplus_{\{\lambda, \overline{\lambda}\} \subseteq  \Phi^k(\fr_{\C})} U_{\lambda}^k,
\end{equation}
where $U^{k}_{\lambda}$ is a real form of the complex representation $V^{k}_{\lambda}$ if $\lambda$ is real, and 
$U^k_{\lambda} \simeq V^{k}_{\lambda}$ as a real representation otherwise.
From Lemma~\ref{Lemma:RescaledSplitting}, we immediately obtain the following result.
\begin{Lemma}\label{Lem:BenjaminsTrucHerhaald}
%Let $G$ be a connected Lie group, and let $\alpha \colon G \times \fr \rightarrow \fr$ be a continuous action 
%by automorphisms on a real solvable Lie algebra $\fr$.
Let $F \subseteq G$ be a relatively compact, symmetric subset. Then
\begin{equation}\label{eq:decompradpart}
\delta^{\fr}_{F} \geq \prod_{k=0}^{r-1} \prod_{\{\lambda, \overline{\lambda}\} \subseteq \Phi^k(\fr_{\C})} \overline{\delta}^{U_{\lambda}^{k}}_{F}.
\end{equation}
\end{Lemma}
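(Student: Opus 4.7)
The plan is to combine the Rescaled Splitting Lemma~\ref{Lemma:RescaledSplitting} iteratively, first along the commutator filtration \eqref{eq:filter} of $\fr$, and then within each quotient $V^k = \fr^k/\fr^{k+1}$ along its decomposition \eqref{eq:decompositionrG} into real isotypical components. Since the statement begins with the unrescaled quantity $\delta^{\fr}_F$, I would first pass to $\overline{\delta}{}^{\fr}_F$ using unimodularity, then bound $\overline{\delta}{}^{\fr}_F$ from below by the right-hand side.

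First, since we are in the setting of Section~\ref{sec:ConnectedLieGroups}, $G$ is unimodular, and the adjoint representation of $G$ on $\fg$ has determinant~$1$. The quotient representation on $\fs = \fg/\fr$ factors through the adjoint representation of the semisimple group $S = G/R$, which also has determinant~$1$. Consequently the restriction of $\Ad_G$ to $\fr$ has determinant~$1$, and hence $\delta^{\fr}_F \geq \overline{\delta}{}^{\fr}_F$: any balanced neighbourhood basis of $0\in \fr$ is eligible in the definition of $\delta^{\fr}_F$, and on such neighbourhoods $\pi$ and $\overline{\pi}$ agree.

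Next, I would telescope along the $G$-invariant filtration $\fr = \fr^0 \supset \fr^1 \supset \cdots \supset \fr^{r-1}\supset \fr^{r} = \{0\}$. For each $k$, $\fr^{k+1}\subseteq \fr^{k}$ is a $G$-subrepresentation with quotient $V^k = \fr^{k}/\fr^{k+1}$, so the Rescaled Splitting Lemma yields
\[
\overline{\delta}{}^{\fr^{k}}_F \geq \overline{\delta}{}^{\fr^{k+1}}_F\cdot \overline{\delta}{}^{V^k}_F.
\]
Iterating from $k=0$ and using $\overline{\delta}{}^{\{0\}}_F = 1$, we obtain $\overline{\delta}{}^{\fr}_F \geq \prod_{k=0}^{r-1}\overline{\delta}{}^{V^k}_F$.

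Finally, I would decompose each $V^k$ as in \eqref{eq:decompositionrG} by enumerating the conjugate pairs $\{\lambda,\overline{\lambda}\}\subseteq \Phi^k(\fr_{\C})$ as $\{\lambda_1,\overline{\lambda}_1\},\ldots,\{\lambda_N,\overline{\lambda}_N\}$. Setting $W_j := U^{k}_{\lambda_1}\oplus\cdots\oplus U^{k}_{\lambda_j}\subseteq V^k$, each $W_j$ is a $G$-subrepresentation with $W_j/W_{j-1}\cong U^{k}_{\lambda_j}$, so another application of Lemma~\ref{Lemma:RescaledSplitting} gives $\overline{\delta}{}^{W_j}_F \geq \overline{\delta}{}^{W_{j-1}}_F\cdot \overline{\delta}{}^{U^k_{\lambda_j}}_F$. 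Iterating yields $\overline{\delta}{}^{V^k}_F \geq \prod_{\{\lambda,\overline{\lambda}\}\subseteq \Phi^k(\fr_{\C})} \overline{\delta}{}^{U^k_{\lambda}}_F$, and combining with the previous two steps gives the desired inequality.

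The only real subtlety to watch is the passage from $\delta^{\fr}_F$ to $\overline{\delta}{}^{\fr}_F$ via unimodularity; everything else is a mechanical iteration of Lemma~\ref{Lemma:RescaledSplitting}, which already handles the non-split case, so no additional complementing argument is required along either the commutator filtration or the isotypical refinement.
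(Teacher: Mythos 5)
Your proposal is correct and follows essentially the same route as the paper: the reduction $\delta^{\fr}_F \geq \overline{\delta}{}^{\fr}_F$ via unimodularity is exactly the paper's preamble to the lemma, and the rest is the same iteration of Lemma~\ref{Lemma:RescaledSplitting} along the commutator filtration and the isotypical decomposition of each $\fr^k/\fr^{k+1}$ (the paper telescopes the filtration via the subrepresentations $\fr^k/\fr^{k+1} \subseteq \fr/\fr^{k+1}$ rather than $\fr^{k+1}\subseteq \fr^k$, which is an immaterial difference).
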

\begin{proof}
Since $\fr^k/\fr^{k+1} = \bigoplus_{\{\lambda, \overline{\lambda}\} \subseteq  \Phi^k(\fr_{\C})} U_{\lambda}^k$, Lemma~\ref{Lemma:RescaledSplitting} 
yields
\begin{equation}\label{eq:onerow}
\overline{\delta}^{\fr^k/\fr^{k+1}}_{F}\geq  \prod_{\{\lambda, \overline{\lambda}\} \subseteq \Phi^k(\fr_{\C})} \overline{\delta}^{U_{\lambda}^{k}}_{F}.
\end{equation}
The result then follows by applying Lemma~\ref{Lemma:RescaledSplitting} to the subrepresentations $\fr^k/\fr^{k+1}$ of $\fr/\fr^{k+1}$ repeatedly 
for $k = 2, \ldots, r-1$.
\end{proof}

If $\lambda$ is real, then the maximal solvable subgroup $R\subseteq G$ acts on the real subspace $U_{\lambda}^k = V^k_{\lambda} \cap V^k(\fr)$
by the real character 
$e_{\lambda} \colon R \rightarrow \R^{>0}$.  The rescaled representation $\overline{\pi}{}^{k}_{\lambda}$ of $G$ is therefore trivial on $R \subseteq G$,
and factors through a real representation of the semisimple quotient $S = G/R$.

If $\lambda$ is not real, then $R$ acts on $U^{k}_{\lambda} = V_{\lambda}^{k}$ by the (complex) character $e_{\lambda} \colon R \rightarrow \C^{\times}$, 
and the restriction of $\overline{\pi}{}_{\lambda}^{k}$ to $R\subseteq G$
factors through a homomorphism $\chi_{\lambda} \colon R \rightarrow \mathrm{U}(1)$. In fact, $\chi_{\lambda}$ is trivial 
on the commutator subgroup $[R,R]$ because $\lambda$ vanishes on $[\fr,\fr]$, so it factors through a
compact quotient of the abelianization $R/[R,R]$ of $R$.
Since $G$ is connected, it acts trivially on the weight $\lambda$, and hence on the character $\chi_{\lambda}$. 
It follows that $\chi_{\lambda}(grg^{-1}) = \chi_{\lambda}(r)$ for all $r\in R$ and $g\in G$.
The representation $\overline{\pi}_{\lambda}$ of $G$ 
on $U_{\lambda}^{k}$ therefore factors through the central extension 
\begin{equation}\label{eq:centralextension}
S_{\lambda}^{\sharp} := G \times_{\chi_{\lambda}} \mathrm{U}(1),
\end{equation}
the quotient of $G \times \mathrm{U}(1)$ by the closed, normal subgroup $\{(r^{-1}, \chi_{\lambda}(r))\,;\, r \in R\}$ isomorphic to $R$. Note that 
\begin{equation}\label{eq:centralextension2}
1 \rightarrow \mathrm{U}(1) \rightarrow S^{\sharp}_{\lambda} \rightarrow S \rightarrow 1
\end{equation}
is a central extension of the semisimple Lie group $S$ by the abelian Lie group $\mathrm{U}(1)$, and hence a real reductive Lie group.
The relevant subset of $S^{\sharp}_{\lambda}$ is the image $[F] = \{[g, 1]\,;\, g \in F\}$ of $F \times \{1\}$ under the quotient map, and the 
factors $\overline{\delta}{}^{U^{k}_{\lambda}}_{F}$ in \eqref{eq:decompradpart} come from the representation of $S^{\sharp}_{\lambda}$ on $U_{\lambda}^{k}$.

\begin{Remark}
In fact, the central extension \eqref{eq:centralextension2} splits at the Lie algebra level by the second Whitehead Lemma.
It follows that $S^{\sharp}_{\lambda}$ can be expressed in terms of the simply connected cover $\widetilde{S}$ 
and a character $\phi_{\lambda} \colon \pi_1(S) \rightarrow \mathrm{U}(1)$ 
by \[S^{\sharp}_{\lambda} = \widetilde{S} \times_{\phi_{\lambda}} \mathrm{U}(1),\] the quotient of 
$\widetilde{S} \times \mathrm{U}(1)$ by the normal subgroup $\{([\gamma]^{-1}, \phi_{\lambda}([\gamma]))\,;\, [\gamma] \in \pi_1(S)\}$.
The inclusion $\mathfrak{s} \hookrightarrow \mathfrak{g}$ of Lie algebras integrates to a Lie group homomorphism
$\phi \colon \widetilde{S} \rightarrow G$, and the restriction of $\phi$ to $\pi_1(S)$ takes values in $R$.
In terms of $\chi_{\lambda}$ and $\phi$, the character $\phi_{\lambda}$ is given by $\phi_{\lambda} = \chi_{\lambda} \circ \phi|_{\pi_1(S)}$.
\end{Remark}

\begin{Remark}
If we consider $U^k_{\lambda} = V^k_{\lambda}$ as a \emph{complex} representation for $\lambda \neq \overline{\lambda}$, and consider only neighbourhood bases 
that consist of \emph{complex} balanced open subsets (so $\alpha U \subseteq U$ for all \emph{complex} numbers $\alpha$ with $|\alpha| \leq 1$), 
then the $\mathrm{U}(1)$-action is immaterial. So at the cost of restricting attention from $\R$-balanced neighbourhoods 
to $\C$-balanced neighbourhoods, the real reductive Lie groups $S^{\sharp}_{\lambda}$ can be replaced by the
connected semisimple Lie group $\widetilde{S}$.
\end{Remark}

%In the particular case that $S$ is simply connected, we therefore have %$S^{\sharp}_{\lambda}$  simplifies to
%$S^{\sharp}_{\lambda} = S \times \C^{\times}$, with the subset $[F] = \{([g], \chi_{\lambda}(g)) \,;\, g \in F\}$. (This is in particular 
%the case if $G$ is simply connected, in which case it also follows from the Levi decomposition $G = S \rtimes R$ for $G$.)

Note that $\mathrm{dim}_{\R}(U_{\lambda}^{k}) = \mathrm{dim}_{\C}(V_{\lambda}^{k})$ if $\lambda$ is real, and 
$\mathrm{dim}_{\R}(U_{\lambda}^{k}) = \mathrm{dim}_{\C}(V_{\lambda}^{k}) + \mathrm{dim}_{\C}(V_{\overline{\lambda}}^{k})$
if $\lambda$ is not real. Applying the trivial bound \eqref{eq:TrivialBound} to each of the factors $\delta^{U^k_{\lambda}}_{[F]}$
in \eqref{eq:decompradpart} therefore yields the explicit bound
\begin{equation}\label{eq:grofrad}
a_{\mathrm{rad}} \geq \prod_{\lambda \in \Phi(\fr_{\C})} (\rho^{[F]}_{\lambda})^{-\dim_{\C}(V^{k}_{\lambda})},
\end{equation}
where $\rho^{[F]}_{\lambda}$ is the maximal value of $\|\overline{\pi}_{\lambda}(g) \colon V^{k}_{\lambda} \rightarrow V^{k}_{\lambda}\|$ for $[g] \in [F] \subseteq S^{\sharp}_{\lambda}$.
If $\fg$ is solvable, then $G = R$, $S$ is trivial, and $S^{\sharp}_{\lambda}$ is a compact quotient of $R/[R,R]$, acting by operators with norm 1. 
The bound \eqref{eq:grofrad} then reduces to Theorem~\ref{Thm:Solvable}, which is of course sharp.

In general, however, the trivial bound on $\delta^{U^k_{\lambda}}_{[F]}$ will not be sharp. 
A case in point is the tangent group $TS = A \rtimes S$ of a semisimple Lie group $S$, the semidirect product of $S$
with its Lie algebra $A = \fs$, considered as an abelian Lie group with the adjoint action of $S$.
The trivial bound then yields $\delta^{\fa \rtimes \fs}_{F} \geq \rho^{-d/2}\rho^{-\mathrm{dim}_{\R}(\fs)}$ for 
$\rho = \sup_{s\in F}\|\mathrm{Ad}_s\|$. But since $\fr = \fs$ as an $S$-representation,
\cite[Theorem~8.1]{CJKM23}
yields the superior bound $\delta^{\fa \rtimes \fs}_{F} \geq \rho^{-d/2}\rho^{-d/2}$.

\subsection{Open problems and further directions of research}

This leads one to consider the following problem:
\begin{Problem}\label{OpenProblem}
Determine nontrivial bounds on $\overline{\delta}{}_{F}^{V}$, where $F$ is a compact subset of a connected real
reductive Lie group $H$, and $(\pi, V)$ is a finite-dimensional, irreducible, real representation of $H$ with $\mathrm{det}(\pi(h)) = 1$
for all $h\in H$.
\end{Problem}
By `nontrivial', we mean superior to the trivial bound from Proposition~\ref{Prop:trivialbound}.
Except for the case where $V$ is the adjoint representation \cite[Theorem~8.1]{CJKM23}, this is an open problem to the best of our knowledge.

By the above considerations, a solution to Problem~\ref{OpenProblem} would imply nontrivial bounds on $\delta^{\fg}_{F}$ for a general connected unimodular Lie group $G$.
Indeed, if one can find nontrivial bounds in the case where $H = S^{\sharp}_{\lambda}$ and $V$ is an irreducible real subrepresentation of $U^{k}_{\lambda}$, 
then repeated application of the Splitting Lemma~\ref{Lemma:RescaledSplitting} will result in nontrivial bounds for 
$\overline{\delta}{}^{U^{k}_{\lambda}}_{[F]}$, and hence for $\delta^{\fg}_{F}$.

Conversely, suppose that $H$ is a connected real reductive Lie group and $(\pi, V)$ is an irreducible, finite-dimensional, 
real representation of $H$ with determinant one. Then for the connected unimodular Lie group $G = V \rtimes H$,
it seems unlikely that one would be able to arrive at nontrivial bounds on $\delta^{\fg}_{F}$ for $G$ without first considering $\overline{\delta}{}_{[F]}^{V}$
for $H$.

Rather than considering neighbourhood bases of balls $B_{r}(0) \subseteq V$, which lead to the trivial bound \eqref{eq:TrivialBound}, it would be 
a natural direction of research
(following \cite[\S 9]{CJKM23}) to consider neighbourhood bases $\cV$ consisting of sets $U_{r,R} \subseteq V$ of the form 
\[
U_{r,R} :=
\Big(\pi(H) B_{r}(0)\Big) \cap B_{R}(0)
\]
for $0 < r \ll R < 1$. Note that $\bigcap_{r > 0}\bigcup_{R>0} U_{r,R}$ is the union of the $H$-orbits in $V$ whose closure contains the origin.
Although this would probably require detailed information about the orbit structure of the $H$-action on $V$, we believe that
determining $\delta^{V}_{F}(\cV)$ for such neighbourhood bases has the potential to yield nontrivial bounds on $a_{\mathrm{rad}}$.
This has been substantiated in the special case of the adjoint representation, where this is precisely the Ansatz used in 
{\cite[\S 9]{CJKM23}}.


\begin{thebibliography}{CJKM23}

\bibitem[Cas13]{Caspers13}
    M. Caspers,
    \emph{The $L^p$-Fourier transform on locally compact quantum groups},
    J. Operator Theory {\bf 69} (2013), 161--193.
    % no. 1


\bibitem[CPPR15]{CPPR15}
M.\ Caspers, J.\ Parcet, M.\ Perrin, and \'{E}.\ Ricard, \emph{Noncommutative De Leeuw Theorems}, Forum Math., Sigma, Vol.~3, e21 (2015), 
59p.


\bibitem[CJKM23]{CJKM23}
M.\ Caspers, B.\ Janssens, A.\ Krishnaswamy-Usha, L.\ Miaskiwskyi, \emph{Local and multilinear noncommutative de Leeuw theorems}, 
Math. Ann. {\bf 388} (2024), 4251--4305

\bibitem[Coo10]{Cooney10}
   T. Cooney,
   \emph{A Hausdorff-Young inequality for locally compact quantum groups},
   Internat. J. Math. {\bf 21} (2010), %no. 12, 
   1619--1632.
   
\bibitem[Day57]{Day57}
Mahlon M.\ Day, \emph{Amenable semigroups}, Illinois \ J. Math. {\bf 1} (1957), 509--544.

\bibitem[GJP17]{JungeGonzalezParcet}
  A. Gonz\'alez-P\'erez, M. Junge, J. Parcet,
  \emph{Smooth Fourier multipliers in group algebras via Sobolev dimension},
  Ann. Sci. \'Ec. Norm. Sup\'er. (4) {\bf 50} (2017), %no. 4, 
  879--925.


\bibitem[Gra08]{GrafakosBook1}
  L. Grafakos,
  \emph{Classical Fourier analysis},
  Second edition. Graduate Texts in Mathematics, 249. Springer, New York, 2008.

\bibitem[Gra09]{GrafakosBook2}
  L. Grafakos,
  \emph{Modern Fourier analysis},
  Second edition. Graduate Texts in Mathematics, 250. Springer, New York, 2009.



\bibitem[JMP14]{JMPGafa}
   M.\ Junge, T.\ Mei, J.\ Parcet,
   \emph{Smooth Fourier multipliers on group von Neumann algebras},
   Geom. Funct. Anal. {\bf 24} (2014), %no. 6, 
   1913--1980.

  
  \bibitem[JMP18]{JMPJEMS}
   M.\ Junge, T.\  Mei, J.\ Parcet,
   \emph{Noncommutative Riesz transforms -- dimension free bounds and Fourier multipliers},
    J. Eur. Math. Soc. (JEMS) {\bf 20 } (2018), %no. 3, 
    529--595.
    
    
\bibitem[PRS22]{PRS}
   J.\ Parcet, \'E.\ Ricard,  M.\ de la Salle,
   \emph{Fourier multipliers in $\SL_n(\mathbb{R})$},
    Duke Math. J. {\bf 171} (2022), %no. 6, 
    1235--1297.

\bibitem[MRX22]{MeiRicardXu}
   T. Mei, \'E. Ricard, Q. Xu,
   %\emph{A H\"ormander-Mikhlin multiplier theory for free groups and amalgamated free products of von Neumann algebras},
  % arXiv: 2103.04368.
\emph{A Mikhlin multiplier theory for free groups and amalgamated free products of von Neumann algebras},
Adv. Math. {\bf 403} (2022), 108394.

\bibitem[MeRi17]{MeiRicard}
  T.~Mei, \'E.~Ricard,
  \emph{Free Hilbert transforms},
  Duke Math. J. {\bf 166} (2017), %no. 11, 
  2153--2182.


\bibitem[Kun58]{Kunze58}
 R. Kunze,
 \emph{$L_p$ Fourier transforms on locally compact unimodular groups},
 Trans. Amer. Math. Soc. {\bf 89} (1958), 519--540.

\bibitem[Lee65]{DeLeeuw}
   K. de Leeuw,
   \emph{On $L_p$ multipliers},
   Ann. of Math. (2) {\bf 81} (1965), 364--379.

\bibitem[Ste70]{Stein}
  E. Stein,
  \emph{Singular integrals and differentiability properties of functions},
  Princeton Mathematical Series, No. 30 Princeton University Press, Princeton, N.J. 1970 xiv+290 pp.



\bibitem[V84]{Varadarajan}
V.S.~Varadarajan, Lie Groups, Lie Algebras, and Their Representations, (reprint of the 1974 Prentice-Hall edition), Grad. Texts in Math. {\bf 102}, Springer, New York, 1984. 

\end{thebibliography}
\end{document}